\newtheorem{theorem}{Theorem}[section]
\newtheorem{proposition}[theorem]{Proposition}
\newtheorem{lemma}[theorem]{Lemma}
\newtheorem{corollary}[theorem]{Corollary}
\theoremstyle{definition}
\newtheorem{remark}[theorem]{Remark}
\numberwithin{equation}{section}
\begin{document}

\title[Connections, determinants and the Tyurin parametrization]{Meromorphic
connections, determinant line bundles and the Tyurin parametrization}

\author[Indranil Biswas]{Indranil Biswas}

\address{School of Mathematics, Tata Institute of Fundamental Research,
Homi Bhabha Road, Mumbai 400005}

\email{indranil@math.tifr.res.in}

\author[Jacques Hurtubise]{Jacques Hurtubise}

\address{Department of Mathematics, McGill University, Burnside
Hall, 805 Sherbrooke St. W., Montreal, Que. H3A 2K6, Canada}

\email{jacques.hurtubise@mcgill.ca}

\subjclass[2010]{53D30, 14D20, 34M40}

\keywords{Tyurin parametrization, meromorphic connection, framing, stable bundle, symplectic
form, Atiyah bundle.}

\date{}

\begin{abstract}
We develop a holomorphic equivalence between on one hand the space of pairs (stable 
bundle, flat connection on the bundle) and the ``sheaf of holomorphic connections'' (the sheaf of 
holomorphic splittings of the one-jet sequence) for the determinant (Quillen) line bundle over the 
moduli space of vector bundles on a compact connected Riemann surface. This equivalence is shown to be 
holomorphically symplectic. The equivalences, both holomorphic and symplectic, are rather 
quite general, for example, they extend to other general families of holomorphic bundles and 
holomorphic connections, in particular those arising from ``Tyurin families" of stable 
bundles over the surface. These families generalize the Tyurin parametrization of 
stable vector bundles $E$ over a compact connected Riemann surface, and one can build 
above them spaces of (equivalence classes of) holomorphic connections, which are again symplectic. 
These spaces are also symplectically biholomorphically equivalent to the sheaf of 
holomorphic connections for the determinant bundle over the Tyurin family. The last portion of the 
paper shows how this extends to moduli of framed bundles.
\end{abstract}

\maketitle

\tableofcontents

\section{Introduction}\label{sec0}

We will address in this paper an equivalence which seems to hold in a certain generality both 
in the holomorphic and holomorphically symplectic category between two objects, defined over 
various moduli space of stable bundles on a Riemann surface $X$. The equivalence is somewhat 
surprising, as we do not have a direct map between the two spaces, but instead obtain the 
equivalence by showing that certain cohomology classes are the same in both cases, and indeed 
have the same representative.

Our basic examples of the first set of spaces are moduli spaces of pairs $(E, \nabla)$ with $E$ a stable
bundle and $\nabla$ a holomorphic connection on $E$. Let $X$ be a compact connected Riemann
surface. A stable vector bundle over $X$ of degree zero 
admits holomorphic connections. Once we fix a point $x_0\, \in\, X$, a stable vector bundle 
$E$ on $X$ of degree $d$ and rank $r$ admits logarithmic connections on $X$ nonsingular on 
$X\setminus\{x_0\}$ whose residue at the point $x_0$ is $-\frac{d}{r}\text{Id}_{E_{x_0}}$. Let 
${\mathcal C}$ denote the moduli space of pairs of the form $(E,\, D)$, where $E$ is a stable 
vector bundle on $X$ of rank $r$ and degree $d$, and $D$ is a holomorphic or logarithmic 
connection on $E$ of the above type depending on whether $d$ is zero
or not. This moduli space ${\mathcal C}$ is equipped with a 
natural holomorphic symplectic structure constructed by Goldman and Atiyah--Bott \cite{Go}, 
\cite{AB}. Let $\mathcal M$ denote the moduli space of stable vector bundles on $X$ of rank $r$ 
and degree $d$. The projection ${\mathcal C}\, \longrightarrow\, \mathcal M$, $(E,\, D)\, 
\longmapsto\, E$, has the structure of a holomorphic $T^*\mathcal M$--torsor. 

The more general family in the first set of spaces corresponding to pairs of a vector bundle and 
a holomorphic connection on it is developed from the Tyurin parametrization. This parametrization of 
vector bundles on curves was introduced in \cite{Ty1}, \cite{Ty2}. To explain it briefly, 
fixing a holomorphic vector bundle $V$ of rank $r$ on a compact connected Riemann surface 
$X$, consider all torsion quotients of $V$ of degree $d'$. This way a holomorphic family 
of vector bundles on $X$ of rank $r$ and degree $d\, :=\, \text{degree}(V)- d'$ is obtained,
as the kernels of natural projection to the torsion quotients. Choosing $d'$ correctly gives a 
parametrization of some suitable open set of the moduli space of stable vector bundles
of rank $r$ and degree $d$. This 
Tyurin parametrization has turned out to be very useful; see \cite{Kr}, \cite{KN}, 
\cite{Hu}, \cite{She} and references therein. We now allow more general families by 
letting the degree $d'$ vary. Let ${\mathcal Q}_s$ be a Tyurin parameter space of stable 
vector bundles on $X$ of rank $r$ and degree $d$, built from a fixed vector bundle of
rank $r$ and degree 
$d'$. For a point $z\, \in\, {\mathcal Q}_s$, the corresponding stable vector bundle on 
$X$ will be denoted by ${\mathcal K}^z$. Our space ${\mathcal C}({\mathcal Q}_s)$ will be 
built from the pull-back from $\mathcal M$ of the earlier mentioned
holomorphic fiber bundle ${\mathcal C}$, taking the fiber 
product with $T^*{\mathcal Q}_s$ and quotienting by an equivalence relation. The resulting 
variety ${\mathcal C}({\mathcal Q}_s)$ is a $T^*{\mathcal Q}_s$--torsor and has a
holomorphic symplectic structure (Lemma \ref{lem3}).

Our second set of spaces will be the sheaves of connections on a determinant line bundle.
Given any holomorphic line bundle $L$ on a complex manifold $Z$, let
$\rho\, :\, \text{Conn}(L)\, \longrightarrow\, Z$ be the holomorphic fiber bundle
given by the ``sheaf'' of holomorphic connections on $L$. More precisely,
$\text{Conn}(L)\, \subset\, \text{At}(L)^*\,=\, J^1(L)\otimes L^*$ is the
inverse image, for the natural projection $\text{At}(L)^*\, \longrightarrow\,
{\mathcal O}_Z$, of the image of the section of ${\mathcal O}_Z$ given by the constant
function $1$ on $Z$; here $\text{At}(L)$ denote the Atiyah bundle for $L$ (it
contains ${\mathcal O}_Z$ in a natural way with the quotient being $TZ$). Then $\text{Conn}(L)$ is
a holomorphic torsor over $Z$ for the holomorphic cotangent bundle $T^*Z$, and it is equipped
with a holomorphic symplectic structure given by the curvature of the tautological
holomorphic connection on the holomorphic line
bundle $\rho^*L\, \longrightarrow\, \text{Conn}(L)$.
In our cases the line bundles $L$ will be the natural determinant line bundles over our moduli spaces of
vector bundles.

The main theorems are that the spaces of the first set are equivalent torsors over the 
various moduli of vector bundles to the spaces in the second set; for the natural symplectic 
forms, this equivalence is also a holomorphic symplectic equivalence (Proposition 
\ref{prop1}, Theorem \ref{thm1}, Theorem \ref{thm2}). Note that the Tyurin families 
step outside of the family of stable bundles and, since the Quillen metric is uniformly 
defined, this provides symplectic ``extensions'' of the space ${\mathcal C}$. In some 
sense, the determinant bundle is a much more robust object, and the equivalence of the 
torsors allows us to better understand the symplectic geometry of the ``connection space'' 
${\mathcal C}({\mathcal Q}_s)$. These latter spaces, with their torsor structures, have 
proved extremely useful in understanding the symplectic and Hamiltonian aspects of 
isomonodromic deformations; see \cite{Kr}, \cite{KN}, \cite{Hu}.

For the framed version, fix a nonzero effective divisor $\mathbb S$ on $X$. Let ${\mathcal D}^F$ be the space corresponding
to the triples of the form $(z,\, \sigma,\, D)$, where $z\, \in\, {\mathcal Q}_s$ and $\sigma$ is
a framing on ${\mathcal K}^z$ over $\mathbb S$ while $D$ is a
meromorphic connection on ${\mathcal K}^z$ whose polar part has support contained in $\mathbb S$. This
moduli space ${\mathcal D}^F$ has a holomorphic symplectic structure (Corollary \ref{cor2}).
This holomorphic symplectic structure on ${\mathcal D}^F$ is constructed by identifying
${\mathcal D}^F$ with $\text{Conn}(L')$, where $L'$ is the determinant line bundle on
the moduli space of pairs of the form $(z,\, \sigma)$, where $z\, \in\, {\mathcal Q}_s$
and $\sigma$ is a framing on the vector bundle ${\mathcal K}^z$ over $\mathbb S$ (Theorem \ref{thm4}).

\section{Isomorphism of torsors for the cotangent bundle}

\subsection{Moduli spaces of vector bundles and connections}\label{se2.1}

The holomorphic cotangent
bundle of $Z$ will be denoted by $T^*Z$. The real tangent bundle of $Z$ will
be denoted by $T^{\mathbb R}Z$.

Let $X$ be a compact connected Riemann surface of genus $g$, with $g\, \geq\, 2$.
Let $K_X$ denote the holomorphic cotangent bundle of $X$. The holomorphic tangent
bundle of a complex manifold $Z$ will be denoted by $TZ$.

For fixed integers $r\, \geq \,2$ and $d$, let
\begin{equation}\label{e1}
{\mathcal M}_X(r,d)\, =:\, {\mathcal M}
\end{equation}
denote the moduli space of all stable vector bundles on $X$ of rank $r$ and degree $d$. This
$\mathcal M$ is an irreducible smooth quasiprojective complex variety of dimension
$r^2(g-1)+1$ (see \cite{Ne}). It is projective if and only if $d$ is coprime to $r$.

If $d\, \not=\, 0$, fix a point $x_0\,\in\, X$. Let
\begin{equation}\label{e2}
{\mathcal C}_X(r,d)\, =:\, {\mathcal C}
\end{equation}
denote the moduli space of isomorphism classes of logarithmic connections $(E,\, D)$,
where
\begin{enumerate}
\item $E\, \in\, {\mathcal M}$ (defined in \eqref{e1}), and

\item $D$ is a logarithmic connection on $E$, nonsingular over $X\setminus \{x_0\}$, such
that the residue ${\rm Res}(D,x_0)$ of $D$ at $x_0$ is $-\frac{d}{r}\text{Id}_{E_{x_0}}$.
(See \cite{De} for logarithmic connections and their residues.)
\end{enumerate}

If $d\,=\, 0$, then 
$$
{\mathcal C}_X(r,d)\, =:\, {\mathcal C}
$$
denotes the moduli space of isomorphism classes of holomorphic connections $(E,\, D)$,
where
\begin{enumerate}
\item $E\, \in\, {\mathcal M}$, and

\item $D$ is a holomorphic connection on $E$. (See \cite{At} for holomorphic connections.)
\end{enumerate}
It is known that for every $E\, \in\, {\mathcal M}$, there is a logarithmic/holomorphic
connection $D$ of the above type.

The moduli space $\mathcal C$ is an irreducible smooth quasiprojective complex variety of dimension
$2(r^2(g-1)+1)$ (see \cite{Si1}, \cite{Si2}, \cite{Ni}). It is equipped with an algebraic
symplectic form
\begin{equation}\label{gos}
\Omega_{\mathcal C}\, \in\, H^0({\mathcal C},\, \bigwedge\nolimits^2 T^*{\mathcal C})
\end{equation}
\cite{AB}, \cite{Go}; the symplectic form on $\mathcal C$
constructed in \cite{AB}, \cite{Go} is, a priori, only holomorphic, however
in \cite[p.~331, Theorem 3.2]{Bi} it is shown that this holomorphic symplectic form
coincides with a natural algebraic form on $\mathcal C$.

Let
\begin{equation}\label{e3}
\phi\, :\, {\mathcal C}\, \longrightarrow\, {\mathcal M}
\end{equation}
be the forgetful map $(E,\, D)\, \longmapsto\, E$ that forgets the logarithmic/holomorphic
connection $D$ on $E$.

Given any $(E,\, D)\, \in\, {\mathcal C}$, and any $\theta \, \in\, H^0(X,\,
\text{End}(E)\otimes K_X)$, note that $$(E,\, D+\theta)\, \in\, {\mathcal C}\, .$$ Conversely,
if $(E,\, D')\, \in\, {\mathcal C}$, then we have $$D'-D\, \in\, H^0(X,\,
\text{End}(E)\otimes K_X)\, .$$ On the other hand, $H^0(X,\,
\text{End}(E)\otimes K_X)$ is the fiber $T^*_E {\mathcal M}\,=\, (T_E{\mathcal M})^*$ of
the holomorphic cotangent bundle of ${\mathcal M}$ at the point $E$. The projection $\phi$ in \eqref{e3}
and the above fiberwise action of $T^*{\mathcal M}$ on the moduli space $\mathcal C$ in \eqref{e2}
actually make $\mathcal C$ a complex algebraic torsor over
$\mathcal M$ for the holomorphic cotangent bundle
$T^*{\mathcal M}\, \longrightarrow\, \mathcal M$.

We note that the holomorphic isomorphism classes of the holomorphic
$T^*{\mathcal M}$--torsors on $\mathcal M$ are parametrized by $H^1({\mathcal M},\,
T^*{\mathcal M})$; this aspect is elaborated in Section \ref{se2.3}.

\subsection{The determinant line bundle and a torsor over $\mathcal M$}\label{se2.2}

As before, fix a point $x_0\,\in\, X$.

Let ${\mathcal L}\,\longrightarrow \,\mathcal M$ be the determinant line bundle whose
fiber over any $E\,\in \,\mathcal M$ is
$$(\bigwedge\nolimits^{\rm top}H^0(X,\, E)^*\otimes
\bigwedge\nolimits^{\rm top}H^1(X,\, E))^{\otimes r}\otimes (\bigwedge\nolimits^r E_{x_0})^{\chi}\, ,
$$
where $\chi\, :=\,\chi (E)\,=\, h^0(E) - h^1(E)\,=\, d- r(g-1)$ is the Euler characteristic.
We shall briefly recall the construction of $\mathcal L$.
Given any holomorphic family of stable vector bundles $V$ of rank $r$ and degree $d$ on $X$
$$
V \,\longrightarrow \, X\times T\, \stackrel{p}{\longrightarrow}\, T
$$
parametrized by a complex manifold $T$, we have the holomorphic line bundle
\begin{equation}\label{db}
(\det R^0p_*V)^{\otimes -r}\otimes (\det R^1p_*V)^{\otimes r}
\otimes \det (s_{x_0}^* V)^{\otimes \chi} \,\longrightarrow\, T\, ,
\end{equation}
where $\det {\mathcal W}\,\longrightarrow\, T$ is the determinant line bundle for a coherent analytic sheaf
${\mathcal W}$ on $T$ (see \cite[Ch.~V, \S~6]{Ko} for the construction of determinant bundle) and
$$s_{x_0}\, :\, T\, \longrightarrow\, X\times T\, , \ \ t\, \longmapsto\, (x_0,\, t)$$
is the section of the projection $p$, while $\chi$ as before is $d- r(g-1)\,\in\,
\mathbb Z$. The holomorphic
line bundle on $T$ in \eqref{db} does not change if the vector bundle $V$ is replaced by
$V\otimes p^* L_0$, where $L_0$ is a holomorphic line bundle on $T$. Therefore, the construction
in \eqref{db} produces a holomorphic line bundle
\begin{equation}\label{e4}
{\mathcal L}\, \longrightarrow\, {\mathcal M}
\end{equation}
which is in fact algebraic.

It should be mentioned that the determinant line bundle $\mathcal L$ exists even when there 
is no Poincar\'e bundle over $X\times \mathcal M$. See also \cite{Qu}, \cite{KM} for the 
construction of the determinant line bundle in a general context.

Consider the Atiyah exact sequence
\begin{equation}\label{zeta}
0\, \longrightarrow\,{\mathcal O}_{\mathcal M} \, \longrightarrow\,\text{At}({\mathcal L})
\, \stackrel{\zeta}{\longrightarrow}\, T{\mathcal M} \, \longrightarrow\, 0
\end{equation}
for the holomorphic line bundle $\mathcal L$ in \eqref{e4}, where $\text{At}({\mathcal L})
\,=\, {\mathcal L}\otimes J^1({\mathcal L})^*$ is the Atiyah bundle for $\mathcal L$
with $J^1({\mathcal L})$ being the first jet bundle for $\mathcal L$ (see \cite{At}). Let
\begin{equation}\label{e5}
0\, \longrightarrow\, T^*{\mathcal M}\,=\, \Omega^1_{\mathcal M} \, \longrightarrow\,\text{At}({\mathcal L})^*
\, \stackrel{q}{\longrightarrow}\, {\mathcal O}_{\mathcal M} \, \longrightarrow\, 0
\end{equation}
be the dual sequence of \eqref{zeta}; note that $\text{At}({\mathcal L})^*\,=\,
J^1({\mathcal L})\otimes {\mathcal L}^*$, and $J^1({\mathcal L})$ fits in the short
exact sequence
$$
0\, \longrightarrow\, \Omega^1_{\mathcal M} \, \longrightarrow\, J^1({\mathcal L}) \, \longrightarrow\,
J^0({\mathcal L})\,=\, {\mathcal L} \, \longrightarrow\, 0\, .
$$
The section of ${\mathcal O}_{\mathcal M}$ given
by the constant function $1$ on $\mathcal M$ will be denoted by $1_{\mathcal M}$. Define
\begin{equation}\label{e6}
\text{At}({\mathcal L})^* \, \supset\, q^{-1}(1_{\mathcal M}) \, =:\,
{\rm Conn}({\mathcal L})\, \stackrel{q_0}{\longrightarrow}\, {\mathcal M}\, ,
\end{equation}
where $q$ is the projection in \eqref{e5}, and $q_0$ is the restriction of $q$ to
the subvariety ${\rm Conn}({\mathcal L})\, \subset\,
\text{At}({\mathcal L})^*$. From \eqref{e5} it follows immediately that
${\rm Conn}({\mathcal L})$ is a complex algebraic torsor for the holomorphic cotangent bundle
$T^*{\mathcal M}\, \longrightarrow\, \mathcal M$.

\subsection{Cohomological invariants for holomorphic torsors}\label{se2.3}

Let $Z$ be a complex manifold and $V$ a holomorphic vector bundle over $Z$. The isomorphism 
classes of holomorphic torsors ${\mathbb V}\, \longrightarrow\, Z$ for $V$ are parametrized by 
$H^1(Z,\, V)$. To see this, choose local holomorphic sections of $s_i\, :\, 
U_i\,\longrightarrow\, {\mathbb V}\vert_{U_i}$, where $\{U_i\}_{i\in I}$ is an open covering 
of $Z$. For any ordered pair $(i,\, j)\, \in\, I\times I$, consider $s_i-s_j$ on $U_i\cap 
U_j$, which is in fact a holomorphic section of $V\vert_{U_i\cap U_j}$. This $1$--cocycle 
$\{s_i-s_j\}_{i,j\in I}$ gives the element of $H^1(Z,\, V)$ corresponding to $\mathbb V$.

There is also a Dolbeault type construction of the above cohomology class in
$H^1(Z,\, V)$ associated to the holomorphic $V$--torsor ${\mathbb V}$. For this first note
that since the fibers of ${\mathbb V}\, \longrightarrow\, Z$ are contractible, there are
$C^\infty$ sections of this fiber bundle (however there is a holomorphic section if and only if
the torsor $\mathbb V$ for $V$ is trivial). Take a $C^\infty$ section $s$ of the fiber bundle
${\mathbb V}\, \longrightarrow\, Z$. The obstruction for $s$ to be holomorphic is clearly
the failure
of the differential $ds$ of the map $s$ to intertwine the almost complex structures of
$Z$ and $\mathbb V$. More precisely, consider the homomorphism
\begin{equation}\label{e8}
(ds)'\, :\, T^{\mathbb R}Z\, \longrightarrow\, s^* T^{\mathbb R}{\mathbb V}\, ,\ \
v\, \longmapsto\, \frac{1}{2}(ds(v) + J_{\mathbb V}(ds(J_Z(v))))\, ,
\end{equation}
where $J_Z$ and $J_{\mathbb V}$ are the almost complex structures on $Z$ and $\mathbb V$
respectively, and $$ds\,:\, T^{\mathbb R}Z\, \longrightarrow\, s^*T^{\mathbb R}{\mathbb V}$$ is
the differential of the map $s$. It is straight-forward to check that
\begin{itemize}
\item $(ds)'\,=\, 0$ if and only if the map $s$ is holomorphic,

\item $(ds)'$ is a $C^\infty$ section of $\Omega^{0,1}_Z\otimes V$ over $Z$, and

\item $\overline{\partial}_V ((ds)')\,=\, 0$, so $(ds)'$ defines an element
of the Dolbeault cohomology $H^1(Z,\, V)$.
\end{itemize}
The element of $H^1(Z,\, V)$ defined by $(ds)'$ coincides with the \v{C}ech cohomology
class constructed earlier using locally
defined holomorphic sections of $\mathbb V$. Note that when $V$ is the
holomorphic cotangent bundle $T^*Z$, the above section $(ds)'$ is a
$\overline{\partial}$--closed $(1,1)$--form on $Z$.

The class in $H^1({\mathcal M},\, T^*{\mathcal M})$ corresponding to the $T^*{\mathcal
M}$--torsor ${\rm Conn}({\mathcal L})$ constructed in \eqref{e6}
is $2\pi\sqrt{-1}\cdot c_1({\mathcal L})$, where
$c_1({\mathcal L})$ is the rational first Chern class of the holomorphic
line bundle $\mathcal L$ in \eqref{e4}.

In Section \ref{se2.1} we saw that $\mathcal C$ is a torsor over $\mathcal M$
for $T^* {\mathcal M}$, and in Section \ref{se2.2} we saw that
${\rm Conn}({\mathcal L})$ is a torsor over $\mathcal M$
for $T^*{\mathcal M}$. We shall compare the isomorphism classes of these
two torsors.

First consider the projection $\phi$ in \eqref{e3}. Given any $E\,\in\, \mathcal M$,
by a theorem of Narasimhan and Seshadri, \cite{NS}, there is a unique logarithmic connection
$D_E$ on $E$ such that
\begin{enumerate}
\item $D_E$ is nonsingular on $X\setminus\{x_0\}$,

\item the monodromy of $D_E$ lies in ${\rm U}(r)$, and

\item the residue ${\rm Res}(D_E,x_0)$ of $D_E$ at $x_0$ is
$-\frac{d}{r}\text{Id}_{E_{x_0}}$.
\end{enumerate}
If $d\,=\, 0$, then $D_E$ is a holomorphic connection on $E$ whose monodromy lies
in ${\rm U}(r)$. Therefore, the projection $\phi$ in \eqref{e3} has a canonical section
\begin{equation}\label{e7}
\beta\, :\, {\mathcal M}\, \longrightarrow\, {\mathcal C}
\end{equation}
given by $E\, \longmapsto\, D_E$. This section $\beta$ is $C^\infty$, however it is
not holomorphic.

The moduli space $\mathcal M$ is equipped with a natural K\"ahler form \cite{AB},
\cite{Go}; this K\"ahler form on $\mathcal M$ will be denoted by $\omega_{\mathcal M}$.
We briefly recall the construction of $\omega_{\mathcal M}$. Just as in the
construction of $\beta$ in \eqref{e7}, identify $\mathcal M$ with the equivalence classes of
unitary representations of $\pi_1(X\setminus\{x_0\})$ such that the monodromy around $x_0$ is
$\exp(2\pi\sqrt{-1}d/r)\cdot{\rm Id}$. On the other hand, such a representation space is equipped
with the Goldman symplectic form. This symplectic form coincides with the K\"ahler form
$\omega_{\mathcal M}$. It should be mentioned that
\begin{equation}\label{f12}
\omega_{\mathcal M}\,=\, \beta^*\Omega_{\mathcal C}\, ,
\end{equation}
where $\Omega_{\mathcal C}$ is the holomorphic symplectic form on $\mathcal C$ in \eqref{gos}.

The following lemma is proved in \cite[p.~308, Theorem 2.11]{BR}.

\begin{lemma}\label{lem1}
For the $C^\infty$ section $\beta$ in \eqref{e7}, the corresponding $(1,1)$--form $(d\beta)'$
on $\mathcal M$ constructed in \eqref{e8} coincides with $\omega_{\mathcal M}/2$.
\end{lemma}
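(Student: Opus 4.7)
The plan is to verify the identity pointwise on $\mathcal{M}$, using the Dolbeault description of $(d\beta)'$ from formula \eqref{e8} together with the explicit realization of $\beta$ through Hermite--Einstein metrics. Since $T^*\mathcal{M}$ is a bundle of type $(1,0)$, the Dolbeault representative $(d\beta)' \in \Omega^{0,1}(\mathcal{M},\, T^*\mathcal{M})$ is automatically a $(1,1)$-form on $\mathcal{M}$; so is $\omega_{\mathcal{M}}/2$. Consequently, it suffices to show that the two tensors agree at an arbitrary point $E \in \mathcal{M}$ when evaluated on a pair of tangent vectors $v,\,w \in T_E \mathcal{M}$, identified via Dolbeault with classes in $H^{0,1}(X,\,\text{End}(E))$.

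The next step is to realize $\beta$ explicitly. Fix a $C^\infty$ Hermitian bundle $(\underline{E},\,h)$ underlying $E$, and parametrize a neighborhood of $E$ in $\mathcal{M}$ by Dolbeault operators $\bar{\partial}'$ on $\underline{E}$, choosing a gauge slice on which $h$ remains constant. The Narasimhan--Seshadri theorem (plus the logarithmic twist at $x_0$ when $d\,\neq\, 0$) identifies $\beta(E')$ with the Chern connection $D_{E'} \,=\, \partial_{h,\bar{\partial}'} + \bar{\partial}'$ of the pair $(\bar{\partial}',\,h)$. Differentiating the Chern-connection construction shows that if $\bar{\partial}_E$ is deformed by $v \in \Omega^{0,1}(X,\,\text{End}(E))$, then the $(1,0)$-part is forced to deform by $-v^{*_h}$ in order to remain $h$-unitary, so $d\beta(v) \,=\, v - v^{*_h}$ as an $\text{End}(\underline{E})$-valued $1$-form on $X$. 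Substituting into the formula for $(ds)'$ in \eqref{e8} and projecting onto the vertical tangent space of $\mathcal{C}$, which is canonically $T^*_E\mathcal{M} \,=\, H^0(X,\,\text{End}(E)\otimes K_X)$, extracts the $T^*\mathcal{M}$-valued $(0,1)$-form $(d\beta)'$ at $E$.

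A direct computation then identifies the resulting pairing $(d\beta)'(v,\,w)$ with $\frac{1}{2}\int_X \text{tr}(v \cup w)$, which is exactly $\omega_{\mathcal{M}}(v,\,w)/2$ by the Atiyah--Bott--Goldman presentation of $\omega_{\mathcal M}$ as a cup-product pairing on $H^1(X,\,\text{End}(E))$. The principal obstacle is the careful bookkeeping required to extract the factor $1/2$: this factor arises from the symmetrization in \eqref{e8} and must be reconciled with the sign conventions for the almost complex structures $J_{\mathcal{M}}$ and $J_{\mathcal{C}}$, the orientation of $X$, and the precise definition of the Hermitian adjoint $v^{*_h}$. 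In the case $d\,\neq\, 0$, one must additionally check that the logarithmic correction at $x_0$ contributes no boundary term; this follows from the residue $-\frac{d}{r}\,\text{Id}_{E_{x_0}}$ being a scalar multiple of the identity, so that trace-pairings of its infinitesimal variations against $\text{End}(E)$-valued deformations vanish at $x_0$.
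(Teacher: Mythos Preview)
Your approach is a direct pointwise Dolbeault computation via Hermite--Einstein metrics, whereas the paper's proof consists entirely of a citation: it invokes \cite[Theorem~2.11]{BR}, which establishes $(d\beta)'=\tfrac{1}{2}\beta^*\Omega_{\mathcal C}$, and then appeals to the identity $\beta^*\Omega_{\mathcal C}=\omega_{\mathcal M}$ already recorded in \eqref{f12}. So your argument is not so much an alternative route as an unpacking, in the present setting, of what \cite{BR} proves; relative to this paper it has the virtue of being self-contained, at the cost of the sign and factor bookkeeping you yourself flag.

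One step needs tightening. Your assertion that $\beta(E')$ coincides with the Chern connection of $(\bar\partial',h)$ for the \emph{fixed} metric $h$ is only valid on a slice along which $h$ remains Hermite--Einstein for every $\bar\partial'$ in the slice. On a generic (complex) Kuranishi slice this fails: the Narasimhan--Seshadri connection is then the Chern connection for a \emph{varying} metric $h_{\bar\partial'}$, and $d\beta$ picks up an extra term from $\dot h$. The slice you actually want is the image of the flat (respectively, projectively flat) unitary connections under $D\mapsto D^{0,1}$; this is a real, not complex, slice, and its tangent space at $E$ consists precisely of the \emph{harmonic} $(0,1)$-forms. On that slice your formula $d\beta(v)=v-v^{*_h}$ is exact, and the rest of your computation --- including the observation that the central, fixed residue at $x_0$ produces no boundary contribution in the logarithmic case --- goes through. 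You should make this choice of slice, and the harmonicity of $v$, explicit rather than leaving it hidden in the phrase ``gauge slice on which $h$ remains constant.''
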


\begin{proof}
In \cite[p.~308, Theorem 2.11]{BR} it was proved that the $(1,1)$--form $(d\beta)'$ coincides
with $\frac{1}{2}\beta^*\Omega_{\mathcal C}$, where $\Omega_{\mathcal C}$ is the
symplectic form on $\mathcal C$ in \eqref{gos}. As noted in \eqref{f12}, the pulled back form
$\beta^*\Omega_{\mathcal C}$ coincides with $\omega_{\mathcal M}$.
\end{proof}

Next consider the projection $q_0$ in \eqref{e6}. Quillen in \cite{Qu} using analytic torsion
constructed a Hermitian structure on the holomorphic line bundle ${\mathcal L}$ constructed in
\eqref{e4}; he also computed the curvature of the corresponding Chern connection
on ${\mathcal L}$. Let
\begin{equation}\label{e9}
\gamma\, :\, {\mathcal M}\, \longrightarrow\, {\rm Conn}({\mathcal L})
\end{equation}
be the $C^\infty$ section of the projection $q_0$ in \eqref{e6} given by the Chern connection
associated to the Quillen metric on ${\mathcal L}$.

The following lemma is proved in \cite[p.~320, Theorem 4.20]{BR}.

\begin{lemma}\label{lem2}
For the $C^\infty$ section $\gamma$ in \eqref{e9}, the corresponding $(1,1)$--form $(d\gamma)'$
on $\mathcal M$ constructed in \eqref{e8} coincides with $r\cdot\omega_{\mathcal M}$.
\end{lemma}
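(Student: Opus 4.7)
The plan is to unwind the Dolbeault definition of $(d\gamma)'$ using local holomorphic trivializations of $\mathcal{L}$, identify the result with the $(1,1)$-curvature form of the Chern connection associated with the Quillen metric $h_Q$, and then invoke Quillen's curvature theorem to equate this curvature with $r \cdot \omega_{\mathcal M}$.

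First, I would cover $\mathcal M$ by open sets $\{U_\alpha\}$ over which $\mathcal L$ admits local holomorphic frames $e_\alpha$. In the frame $e_\alpha$, the Chern connection of the Quillen metric is represented by the $(1,0)$-form $A_\alpha \,=\, \partial \log h_Q(e_\alpha, e_\alpha)$; this is the local expression, in the holomorphic trivialization of $\mathrm{At}({\mathcal L})^*$ induced by $e_\alpha$, of the section $\gamma$. Any local holomorphic section $\sigma_\alpha \colon U_\alpha \to \mathrm{Conn}({\mathcal L})$ is represented in the same trivialization by a holomorphic $(1,0)$-form $A^0_\alpha$. Since $\mathrm{Conn}({\mathcal L})$ is a $T^*{\mathcal M}$-torsor, the difference $\gamma - \sigma_\alpha$ corresponds precisely to the smooth $(1,0)$-form $A_\alpha - A^0_\alpha$ on $U_\alpha$.

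Next, applying the Dolbeault description \eqref{e8} of the obstruction cocycle, we get
\[
(d\gamma)'\big|_{U_\alpha} \,=\, \bar\partial (A_\alpha - A^0_\alpha) \,=\, \bar\partial A_\alpha,
\]
since $A^0_\alpha$ is $\bar\partial$-closed. But $\bar\partial A_\alpha \,=\, \bar\partial \partial \log h_Q(e_\alpha, e_\alpha)$ is the standard local expression for the curvature $(1,1)$-form of the Chern connection of $(\mathcal L, h_Q)$. These local $(1,1)$-forms glue to a globally defined form on $\mathcal M$, which I would identify as the full Chern curvature $F_\gamma$ of the Quillen metric on $\mathcal L$. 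Thus $(d\gamma)' \,=\, F_\gamma$.

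Finally, I would invoke Quillen's theorem on the curvature of the Quillen metric on determinant line bundles, applied to the specific $\mathcal L$ of \eqref{db}. Combined with the Atiyah--Bott/Goldman description of $\omega_{\mathcal M}$ via the Narasimhan--Seshadri identification, the formula gives $F_\gamma \,=\, r \cdot \omega_{\mathcal M}$; the factor $r$ reflects the $r$-th tensor power in the construction of $\mathcal L$, and the correction factor $\det(s_{x_0}^* V)^\chi$ ensures the dependence on $x_0$ is absorbed consistently. This curvature computation is the main obstacle, being a nontrivial analytic result rather than a formal manipulation; the precise form needed here is carried out in \cite[p.~320, Theorem 4.20]{BR}.
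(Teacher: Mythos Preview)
Your proposal is correct and follows essentially the same route as the paper: the paper gives no independent argument for Lemma~\ref{lem2} but simply cites \cite[p.~320, Theorem~4.20]{BR}, and your proof likewise rests on that same citation for the curvature computation. The only difference is that you make explicit the (standard) intermediate step identifying $(d\gamma)'$ with the $(1,1)$--curvature of the Chern connection of $({\mathcal L},\,h_Q)$, which the paper leaves implicit.
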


It may be clarified that in our case the integer $N$ in \cite[p.~320, Theorem 420]{BR} is $1$.

Let
\begin{equation}\label{e10}
\delta\, :\, {\mathcal C}\times_{\mathcal M} T^*{\mathcal M}\, \longrightarrow\,
{\mathcal C}\ \ \text{ and }\ \ \eta\, :\, {\rm Conn}({\mathcal L})\times_{\mathcal M}
T^*{\mathcal M}\, \longrightarrow\, {\rm Conn}({\mathcal L})
\end{equation}
be the $T^*{\mathcal M}$--torsor structures on $\mathcal C$ and ${\rm Conn}({\mathcal L})$
respectively. Let
\begin{equation}\label{e11}
{\textbf m}\, :\, T^*{\mathcal M}\, \longrightarrow\,T^*{\mathcal M}\, ,\ \ v\, \longmapsto\, 2r\cdot v
\end{equation}
be the multiplication by $2r$.

\begin{proposition}\label{prop1}
There is a unique holomorphic isomorphism
$$
F\, :\, {\mathcal C}\, \longrightarrow\, {\rm Conn}({\mathcal L})
$$
such that
\begin{enumerate}
\item $\phi\, =\, q_0\circ F$, where $\phi$ and $q_0$ are the projections in
\eqref{e3} and \eqref{e6} respectively,

\item $F\circ\beta\,=\, \gamma$, where $\beta$ and $\gamma$ are the sections in
\eqref{e7} and \eqref{e9} respectively, and

\item $F\circ\delta \,=\, \eta \circ (F\times {\textbf m})$, where $\delta$, $\eta$ and
$\textbf m$ are constructed in \eqref{e10} and \eqref{e11}.
\end{enumerate}
\end{proposition}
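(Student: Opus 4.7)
\medskip

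\noindent\textbf{Proof proposal.} The uniqueness is essentially formal: conditions (1) and (3) force $F$ to be determined on each fiber $\phi^{-1}(E)$ up to a single overall translation in $T^*_E\mathcal{M}$, and condition (2) then pins down that translation by requiring $F(\beta(E))=\gamma(E)$. Concretely, every $c\in\mathcal{C}$ can be written uniquely as $c=\beta(E)+\theta$ for $E=\phi(c)$ and $\theta\in T^*_E\mathcal{M}$ using the torsor structure $\delta$, and the three conditions force
\[
F(c)\,=\,\eta(\gamma(E),\,2r\theta)\,=\,\gamma(E)+2r\theta.
\]
This formula manifestly defines a $C^\infty$ map satisfying (1), (2), (3). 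All the content of the proposition is therefore concentrated in the single assertion that this $F$ is in fact \emph{holomorphic}.

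The plan is to verify holomorphy locally in terms of holomorphic trivializations and then use Lemmas \ref{lem1} and \ref{lem2} to match the obstructions. Near any point of $\mathcal{M}$, pick local holomorphic sections $\sigma_1$ of $\phi:\mathcal{C}\to\mathcal{M}$ and $\sigma_2$ of $q_0:\mathrm{Conn}(\mathcal{L})\to\mathcal{M}$. Writing $\beta=\sigma_1+(\beta-\sigma_1)$ and $\gamma=\sigma_2+(\gamma-\sigma_2)$, the recipe above expresses $F$ in the chosen trivializations as translation by the $C^\infty$ local $T^*\mathcal{M}$-valued function
\[
h\,:=\,(\gamma-\sigma_2)-2r(\beta-\sigma_1).
\]
Thus $F$ is holomorphic if and only if $\bar\partial h=0$ locally, and this has to hold globally since $\sigma_1,\sigma_2$ can be varied. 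Since $\sigma_1$ and $\sigma_2$ are holomorphic, $\bar\partial(\beta-\sigma_1)$ is exactly the $\Omega^{0,1}\otimes T^*\mathcal{M}$-valued form $(d\beta)'$ of \eqref{e8}, and similarly for $\gamma$. By Lemmas \ref{lem1} and \ref{lem2},
\[
\bar\partial h\,=\,(d\gamma)'-2r\,(d\beta)'\,=\,r\,\omega_{\mathcal{M}}-2r\cdot\tfrac{1}{2}\omega_{\mathcal{M}}\,=\,0.
\]
The precise numerical factor $2r$ in the definition of $\mathbf{m}$ in \eqref{e11} is exactly the one that makes this cancellation occur; equivalently, the cohomology class of $\mathcal{C}$ as a $T^*\mathcal{M}$-torsor is $[\omega_{\mathcal{M}}/2]$ while that of $\mathrm{Conn}(\mathcal{L})$ is $[r\,\omega_{\mathcal{M}}]$, and rescaling the torsor action by $2r$ matches the two classes, which is the cohomological shadow of the computation above.

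The only genuine step, and the one I expect to be mildly delicate to write out, is the comparison of $\bar\partial h$ with the Dolbeault representatives $(d\beta)'$ and $(d\gamma)'$: one has to check carefully that the difference $\beta-\sigma_1$ (which a priori is a $C^\infty$ section of an affine bundle) can be identified with a $C^\infty$ $T^*\mathcal{M}$-valued function whose $\bar\partial$ computes exactly the class of the torsor as in Section~\ref{se2.3}, and the same for $\gamma-\sigma_2$. Once this identification is in place, together with the torsor equivariance discussion that makes $h$ well-defined independently of the local trivializations up to a holomorphic term, the proposition follows: $F$ is the unique $C^\infty$ map determined by (1)--(3), and the curvature-type identity of Lemmas \ref{lem1}--\ref{lem2} upgrades it to a holomorphic isomorphism.
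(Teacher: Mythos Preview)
Your proposal is correct and uses the same essential input as the paper (Lemmas \ref{lem1} and \ref{lem2} giving $(d\gamma)'=2r\,(d\beta)'$), but the packaging differs. The paper trivializes both torsors \emph{globally} via the $C^\infty$ sections $\beta$ and $\gamma$, obtaining diffeomorphisms $\alpha_1\colon T^*\mathcal{M}\to\mathcal{C}$ and $\alpha_2\colon T^*\mathcal{M}\to\mathrm{Conn}(\mathcal{L})$ with $F\circ\alpha_1=\alpha_2$, and then argues that the pulled-back almost complex structures $J_1=\alpha_1^*J_{\mathcal{C}}$ and $J_2=\alpha_2^*J_{\mathrm{Conn}(\mathcal{L})}$ on $T^*\mathcal{M}$ agree (the match along the zero section being precisely the content of the two lemmas). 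You instead trivialize \emph{locally} via holomorphic sections $\sigma_1,\sigma_2$, in which coordinates $F$ becomes $v\mapsto 2rv+h$ with $h=(\gamma-\sigma_2)-2r(\beta-\sigma_1)$, and you show $\bar\partial h=0$ directly. Your route is arguably cleaner: it makes explicit the identification $(d\beta)'=\bar\partial(\beta-\sigma_1)$ (which is exactly the Dolbeault recipe of Section~\ref{se2.3}), and it avoids the somewhat terse step in the paper's argument where one must pass from ``$J_1=J_2$ along the zero section and on fibers'' to ``$J_1=J_2$ everywhere''. Both arguments, of course, pivot on the same cancellation $r\,\omega_{\mathcal{M}}-2r\cdot\tfrac{1}{2}\omega_{\mathcal{M}}=0$.
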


\begin{proof}
It is straight-forward to check that there is a unique $C^\infty$
diffeomorphism
\begin{equation}\label{ef}
F\, :\, {\mathcal C}\, \longrightarrow\, {\rm Conn}({\mathcal L})
\end{equation}
that satisfies the above three conditions. To prove the proposition
we need to show that this map $F$ in \eqref{ef} is actually holomorphic.

From Lemma \ref{lem1} and Lemma \ref{lem2} it can be deduced that for
point every $y\, \in\, \mathcal M$,
the differential of $F$ takes the almost complex structure on $T_{\beta(y)}\mathcal C$ to the
almost complex structure on $T_{\gamma(y)}{\rm Conn}(\mathcal L)$,
where $\beta$ and $\gamma$ are the sections in
\eqref{e7} and \eqref{e9} respectively. Indeed, this follows from
the constructions of $(d\beta)'$ and $(d\gamma)'$ (see
\eqref{e8}), and the fact that they differ by
multiplication by $2r$ (which follows from Lemma \ref{lem1} and Lemma \ref{lem2}).

Consider the diffeomorphisms
$$
\alpha_1\, : \,T^*{\mathcal M} \,\longrightarrow\, {\mathcal C} \ \ \text{ and }\ \
\alpha_2\, : \,T^*{\mathcal M} \,\longrightarrow\, {\rm Conn}({\mathcal L})
$$
that send any $v \,\in\, T^*_y{\mathcal M}$ to $\beta(y)+v$ and $\gamma(y)+2r\cdot v$ 
respectively. From the three properties of the map $F$ in \eqref{ef} it follows that
\begin{equation}\label{d1}
F\circ \alpha_1 \,=\,\alpha_2\, .
\end{equation}
Let $J_1$ (respectively, $J_2$) denote the almost complex structure on the total 
space of $T^*{\mathcal M}$ obtained by pulling back the almost complex structure on ${\mathcal 
C}$ (respectively, ${\rm Conn}({\mathcal L})$) using this diffeomorphism $\alpha_1$ 
(respectively, $\alpha_2$).

From the above observation that the differential of $F$ takes the almost complex structure 
on $T_{\beta(y)}\mathcal C$ to the almost complex structure on $T_{\gamma(y)}{\rm 
Conn}(\mathcal L)$ for every $y\, \in\, \mathcal M$, it follows that $J_1$ and $J_2$ coincide
for every point $(y,\, 0)\,\in\, T^*{\mathcal M}$, $y\, \in\, \mathcal M$. Also, the restrictions of
$J_1$ and $J_2$ to the fiber $T^*_y{\mathcal M}$ coincide for every $y\, \in\, \mathcal M$.

Let $J_0$ denote the natural almost complex structure on $T^*\mathcal M$ given by the
complex structure on $\mathcal M$. Both the almost complex structures
$J_1$ and $J_2$ on $T^*\mathcal M$ have the property
that they coincide with the tautological almost complex structure $J_0$.
Using these properties of $J_1$ and $J_2$ it follows that $J_1$ coincides
with $J_2$. In view of \eqref{d1}, this implies that the map $F$ is holomorphic. 
\end{proof}

\begin{remark}\label{rem1}
There is an algebro-geometric construction of an isomorphism of the form in 
Proposition \ref{prop1}. To simplify, let us suppose that we are in the case where the degree is $r(g-1)$;
this means that $H^0(X,\,E) \,=\,0\,=\, H^1(X,\, E)$ for all $E$ in a nonempty
Zariski open subset of ${\mathcal M}_X(r,r(g-1))$. The locus where this does not hold is 
the theta divisor $\Theta\, \subset\, {\mathcal M}_X(r,r(g-1))$. There is a natural section
of the determinant line bundle, which vanishes on $\Theta$, so we have a short exact sequence
$$0\,\longrightarrow\, {\mathcal O}_{{\mathcal M}_X(r,r(g-1))}\,\longrightarrow \,
{\mathcal L} \,\longrightarrow \, {\mathcal L}|_\Theta\, \,\longrightarrow \, 0$$
For any $E\, \in\, {\mathcal M}_X(r,r(g-1))\setminus \Theta$,
on $X\times X$, the K\"unneth formula and Serre duality together
give $$H^0(X\times X, \,E\boxtimes (E^*\times K_X))\,=\,0\,=\,
H^1(X\times X,\, E\boxtimes (E^*\otimes K_X))\, ,$$ so from the long exact sequence of cohomologies for
the short exact sequence of sheaves
$$
0\,\longrightarrow\, E\boxtimes (E^*\times K_X)\,\longrightarrow\, E\boxtimes (E^*\otimes K_X) (\Delta)
\,\longrightarrow\,(E\boxtimes (E^*\otimes K_X) (\Delta))\vert_\Delta\,\longrightarrow\, 0\, ,
$$
where $\Delta\, \subset\, X\times X$ is the reduced diagonal and $W(\Delta)\,=\, W\otimes
{\mathcal O}_{X\times X}(\Delta)$ for any holomorphic vector bundle $W$ on
$X\times X$, we obtain an isomorphism
$$H^0(X\times X,\, E\boxtimes (E^*\otimes K_X) (\Delta)) \,= \,
H^0(\Delta, \,(E\boxtimes (E^*\otimes K_X) (\Delta))\vert_\Delta)
\,=\, H^0(X, \,{\rm End}(E)) \, .$$
Now choose a theta-characteristic $K^{1/2}_X$ on $X$, and write $E \,=\,
V\otimes K^{1/2}_X$, so that $V$ is of degree zero. Consequently, we have 
$$H^0(X\times X,\, (V\boxtimes V^*) \otimes (K^{1/2}_X\boxtimes K^{1/2}_X)(\Delta))
\,=\, H^0(X, \,{\rm End}(V)) \, . $$
Let $s_V\,\in\, H^0(X\times X,\, (V\boxtimes V^*) \otimes (K^{1/2}_X\boxtimes K^{1/2}_X)(\Delta))$
be the section corresponding to $\text{Id}_V\, \in\, H^0(X, \,{\rm End}(V))$.
Note that $(K^{1/2}_X\boxtimes K^{1/2}_X)(\Delta)\vert_\Delta = {\mathcal O}_\Delta$,
and the isomorphism extends to $2\Delta$, in a unique way if one imposes anti-symmetry
under involution of $X\times X$. Thus, $s_V$ gives a section of
$H^0(2\Delta,\, V\boxtimes V^*)$ that extends the identity automorphism of $V$ over
$\Delta$. But such a section defines a holomorphic connection on $V$.

Now let $\mathbb U$ be the Zariski open dense subset of the moduli space
${\mathcal M}_X(r,0)$ parametrizing all $V$
such that $H^i(X,\, V\otimes K^{1/2}_X)\,=\, 0$ for $i\,=\, 0,\, 1$. The above construction
of holomorphic connection produces a section of $\mathcal C$ over $\mathbb U$. On the other
hand the pullback of the theta line bundle has a canonical trivialization over $\mathbb U$.
Consequently, we get an isomorphism of two $T^*\mathbb U$--torsors on $\mathbb U$ as
in Proposition \ref{prop1}.

A natural question is whether the above isomorphism of $T^*\mathbb U$--torsors coincides with the
isomorphism constructed in Proposition \ref{prop1}.
\end{remark}

\section{Holomorphic symplectic forms}

Consider the holomorphic line bundle $q^*_0{\mathcal L}\, \longrightarrow\, {\rm 
Conn}({\mathcal L})$, where $q_0$ is the projection in \eqref{e6}. Since ${\rm Conn}({\mathcal 
L})$ is defined by the sheaf of holomorphic connections on $\mathcal L$, there is a 
tautological holomorphic connection $\widehat{\nabla}$ on the pulled back holomorphic line bundle
$q^*_0{\mathcal L}$. To briefly describe $\widehat{\nabla}$, first note that there is a
homomorphism
$$
\beta_0\, :\, q^*_0\text{At}({\mathcal L})\, \longrightarrow\,
q^*_0{\mathcal O}_{\mathcal M}\,=\, {\mathcal O}_{{\rm Conn}({\mathcal L})}
$$
given by the tautological splitting of the short exact sequence of holomorphic vector bundles
$$
0\, \longrightarrow\, q^*_0{\mathcal O}_{\mathcal M}\, \longrightarrow\,q^*_0\text{At}({\mathcal L})
\, \longrightarrow\, q^*_0 T{\mathcal M} \, \longrightarrow\,0
$$
on ${\rm Conn}({\mathcal L})$. On the other hand, there is a tautological projection
$$
\beta'_0\, :\, \text{At}(q^*_0{\mathcal L})\, \longrightarrow\,
q^*_0\text{At}({\mathcal L})
$$
such that the diagram
$$
\begin{matrix}
\text{At}(q^*_0{\mathcal L})& \stackrel{\beta'_0}{\longrightarrow} &
q^*_0\text{At}({\mathcal L})\\
~\,~\, \Big\downarrow \zeta'&& ~\,~\,~\,~\,\Big\downarrow q^*_0\zeta\\
T{\rm Conn}({\mathcal L})& \stackrel{dq_0}{\longrightarrow} &
q^*_0T{\mathcal M}
\end{matrix}
$$
is commutative, where $dq_0$ is the differential of the projection $q_0$ in \eqref{e6}, $\zeta$ is the
projection in \eqref{zeta} and $\zeta'$ is the natural projection of
$\text{At}(q^*_0{\mathcal L})$ to $T{\rm Conn}({\mathcal L}) (as in \eqref{zeta})$. Now the composition
$$
\beta_0\circ\beta'_0\, :\, \text{At}(q^*_0{\mathcal L})\, \longrightarrow\,
{\mathcal O}_{{\rm Conn}({\mathcal L})}
$$
gives a splitting of the Atiyah exact sequence for the holomorphic line bundle $q^*_0{\mathcal L}$.
This splitting $\beta_0\circ\beta'_0$ defines the tautological connection $\widehat{\nabla}$
on $q^*_0{\mathcal L}$.

The curvature $\Omega_{\mathcal L}$ of the above holomorphic connection $\widehat{\nabla}$ is a 
closed holomorphic $2$--form on ${\rm Conn}({\mathcal L})$. This holomorphic $2$--form 
$\Omega_{\mathcal L}$ is symplectic. To see this, choose a local holomorphic trivialization of 
$\mathcal L$ over an open subset $U\, \subset\, \mathcal M$. Using the trivial
connection of a trivial line bundle, the inverse image $q^{-1}_0(U)$ gets 
identified with $T^*U$; this identification sends the zero section of $T^*U$ to the section
of $q^{-1}_0(U)$ given by
the trivial connection on ${\mathcal L}\vert_U$. In terms of this identification, the connection 
$\widehat{\nabla}\vert_{q^{-1}_0(U)}$ becomes the Liouville $1$--form on $T^*U$. Therefore, the 
$2$--form $\Omega_{\mathcal L}\vert_{q^{-1}_0(U)}$ coincides with the exterior derivative of the Liouville
$1$--form, and hence it is 
nondegenerate. So $\Omega_{\mathcal L}$ is a symplectic form as it is closed.

We shall describe some properties of the above symplectic form $\Omega_{\mathcal L}$ on ${\rm Conn}({\mathcal L})$.
Take any $C^\infty$ section
$$
s_0\, :\, {\mathcal M}\, \longrightarrow\, {\rm Conn}({\mathcal L})
$$
of the projection $q_0$ in \eqref{e6}, so $q_0\circ s_0\,=\, \text{Id}_{\mathcal M}$. This $s_0$ defines a $C^\infty$
complex connection on $\mathcal L$; we shall denote this
complex connection by $\nabla^{s_0}$. This connection $\nabla^{s_0}$ coincides with
the pulled back connection $s^*_0\widehat{\nabla}$ after invoking the natural identification
of $s^*_0q^*_0{\mathcal L}$ with $\mathcal L$. This implies that
the curvature of the connection $\nabla^{s_0}$ is
the pulled back form $s^*_0\Omega_{\mathcal L}$.

Note that $s^*_0\Omega_{\mathcal L}$ need not be holomorphic, because the map $s_0$ need not 
be holomorphic. Let $$\theta\,\in\, C^\infty(U,\, T^* U)$$ be a smooth $(1,\, 0)$--form defined 
on an open subset $U\, \subset\, {\mathcal M}$. Then $s_0+\theta$ is a $C^\infty$ section of 
$q_0$ over the open subset $U$, which is constructed using the $T^*{\mathcal M}$--torsor 
structure of ${\rm Conn}({\mathcal L})$. Since the curvature of the connection
$\nabla^{s_0}$ on $\mathcal L$, given by a section $s_0$, is $s^*_0\Omega_{\mathcal L}$, it
follows immediately that
\begin{equation}\label{foch0}
((s_0+\theta)^*\Omega_{\mathcal L})\vert_U\,=\, (s^*_0\Omega_{\mathcal L})\vert_U +d\theta\, .
\end{equation}
The fibers of the projection
$q_0$ are Lagrangian with respect to the symplectic form $\Omega_{\mathcal L}$, because the fibers of
the cotangent bundle $T^*{\mathcal M}$ are Lagrangian with respect to the Liouville symplectic form
on $T^*{\mathcal M}$.

As in \eqref{gos}, let $\Omega_{\mathcal C}$ denote the holomorphic symplectic form on $\mathcal C$.

\begin{theorem}\label{thm1}
For the biholomorphism $F$ in Proposition \ref{prop1},
$$
F^*\Omega_{\mathcal L}\,=\, 2r\cdot \Omega_{\mathcal C}\, .
$$
\end{theorem}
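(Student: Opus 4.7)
The plan is to set $\Psi := F^*\Omega_{\mathcal L} - 2r\,\Omega_{\mathcal C}$, a holomorphic $2$-form on $\mathcal C$, and to show $\Psi = 0$ by exhibiting it as the pullback $\phi^*\Psi_0$ of a $2$-form on $\mathcal M$ that is forced to vanish for Hodge-type reasons.

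First I would verify that $\Psi$ is horizontal: $\iota_v\Psi = 0$ for every vertical tangent vector $v$, i.e.\ every element of $T^*\mathcal M$ regarded as tangent to a fiber of $\phi$. By property $(3)$ of Proposition \ref{prop1}, $F$ intertwines the torsor actions via the scaling $\textbf{m}$, so $F_* v = 2r\cdot v$. Applied to a family $s_0 + t\theta$ and differentiated at $t=0$, the identity \eqref{foch0} yields the Liouville-type pairing $\Omega_{\mathcal L}(u, w) = \langle u, (q_0)_* w\rangle$ for vertical $u$ and arbitrary $w$. The corresponding pairing $\Omega_{\mathcal C}(u, w) = \langle u, \phi_* w\rangle$ for the Goldman--Atiyah--Bott form follows from the standard Serre-duality description of $\Omega_{\mathcal C}$ on the moduli of pairs, which exhibits the fiberwise $T^*\mathcal M$-action as Hamiltonian with moment given by the tautological duality. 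Combining these with $F_* v = 2r\cdot v$, one obtains $F^*\Omega_{\mathcal L}(v, w) = 2r\,\Omega_{\mathcal C}(v, w)$, hence $\iota_v \Psi = 0$.

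Since $\Psi$ is closed and $\iota_v\Psi = 0$ for every vertical $v$, the Lie derivative of $\Psi$ along $v$ vanishes by Cartan's formula; together these conditions force $\Psi = \phi^*\Psi_0$ for a unique $2$-form $\Psi_0$ on $\mathcal M$. Since $\Psi$ is holomorphic and $\phi$ is a holomorphic submersion, $\Psi_0$ is a holomorphic $2$-form on $\mathcal M$, and so of pure type $(2,0)$. Using $\phi \circ \beta = \text{Id}_{\mathcal M}$, $F \circ \beta = \gamma$, and \eqref{f12}, I compute
\[
\Psi_0 \,=\, \beta^*\Psi \,=\, \gamma^*\Omega_{\mathcal L} - 2r\,\omega_{\mathcal M}.
\]
Now $\gamma^*\Omega_{\mathcal L}$ is the curvature of the Chern connection on $\mathcal L$ associated to the Quillen metric (via the identification of $\Omega_{\mathcal L}$ with the curvature of $\widehat\nabla$ on $q_0^*\mathcal L$), and is therefore of type $(1,1)$; the K\"ahler form $\omega_{\mathcal M}$ is likewise of type $(1,1)$. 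Thus $\Psi_0$ is simultaneously of pure type $(2,0)$ and of pure type $(1,1)$, which forces $\Psi_0 = 0$. Consequently $\Psi = 0$, giving the theorem.

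The main obstacle I foresee is the first step, specifically establishing the Liouville-type pairing $\Omega_{\mathcal C}(u, w) = \langle u, \phi_* w\rangle$ with the correct normalization (factor $1$, no stray scalar). This is a standard Serre-duality bookkeeping for the Goldman--Atiyah--Bott form, but it must be tracked carefully so that the normalization matches that of the Liouville pairing on the determinant side provided by \eqref{foch0}; any mismatch there would shift the multiplicative constant $2r$ in the final identity.
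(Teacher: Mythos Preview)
Your proposal is correct, and it takes a genuinely different route from the paper's proof in its concluding step. Both arguments rest on the same key technical input: the Liouville-type compatibility of $\Omega_{\mathcal C}$ with the $T^*\mathcal M$--torsor structure, which you phrase as $\Omega_{\mathcal C}(u,w)=\langle u,\phi_*w\rangle$ and which the paper states as \eqref{foch} and proves via the Atiyah--Bott infinite-dimensional description of the symplectic form. So the obstacle you flag is real, and the paper supplies exactly that missing piece; you may simply quote \eqref{foch} for it, and the normalization there is the one you need.

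The divergence comes after this point. The paper combines \eqref{foch0} and \eqref{foch} with the explicit numerical content of Lemma~\ref{lem1} and Lemma~\ref{lem2} (the values of the Dolbeault representatives $(d\beta)'$ and $(d\gamma)'$) to match the two sides directly. Your argument instead observes that the difference $\Psi$ descends to a holomorphic $2$--form $\Psi_0$ on $\mathcal M$, computes $\Psi_0=\gamma^*\Omega_{\mathcal L}-2r\,\omega_{\mathcal M}$ via $F\circ\beta=\gamma$ and \eqref{f12}, and then kills $\Psi_0$ by a pure Hodge-type contradiction: it is simultaneously of type $(2,0)$ (being holomorphic) and of type $(1,1)$ (being the difference of the curvature of a Chern connection and a K\"ahler form). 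This is more conceptual and has the advantage that you never need the precise constant in Quillen's curvature formula nor in Lemma~\ref{lem2}; you only use that $\gamma$ is the Chern connection of a Hermitian metric, so its curvature is $(1,1)$. The paper's approach, by contrast, pins down the constant $2r$ through the explicit curvature/Dolbeault computations, which is more hands-on but also makes the origin of the factor visible.
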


\begin{proof}
We shall first show that the symplectic form $\Omega_{\mathcal C}$ on
$\mathcal C$ is compatible with the $T^*{\mathcal M}$--torsor
structure of $\mathcal C$. The compatibility condition in question says
that for every locally defined holomorphic section
$$
{\mathcal M}\, \supset\, U\, \stackrel{\mathbf{\sigma}}{\longrightarrow}\, {\mathcal C}
$$
of the projection $\phi$ in \eqref{e3}, where $U\, \subset\, {\mathcal M}$ is an open subset, and a holomorphic
$1$--form $\theta$ on $U$,
\begin{equation}\label{foch}
({\mathbf{\sigma}}+\theta)^*\Omega_{\mathcal C}\,=\, {\mathbf{\sigma}}^*\Omega_{\mathcal C}
+d\theta\, .
\end{equation}

The set-up of \cite{AB} will be used for proving \eqref{foch}: we compute on the infinite dimensional space of
connections, and then quotient by the gauge group. Fix a $C^\infty$ complex
vector bundle $V$ on $X$ of rank $r$ and degree $d$. Let
$$
{\mathcal A}^{0,1}\, :=\, C^\infty(X,\, \text{End}(V)\otimes (T^{0,1}X)^*)
\ \ \text{ and } \ \ {\mathcal A}^{1,0}\, :=\, C^\infty(X,\, \text{End}(V)\otimes (T^{1,0}X)^*)
$$
be respectively the spaces of all smooth $(0,\, 1)$--forms and $(1,\, 0)$--forms on $X$
with values in $\text{End}(V)$. Using the nondegenerate pairing on ${\mathcal A}^{0,1}\oplus
{\mathcal A}^{1,0}$
\begin{equation}\label{abp}
(\alpha_1,\, \beta_1)\, (\alpha_2,\, \beta_2))
\,\longmapsto\, \int_X \text{trace}(\alpha_1\wedge\beta_2 - \alpha_2\wedge\beta_1)\, ,
\end{equation}
identify ${\mathcal A}^{0,1}\times{\mathcal A}^{1,0}$ with a subset of
the holomorphic cotangent bundle $(T^{1,0}{\mathcal A}^{0,1})^*$.
Therefore, the restriction to ${\mathcal A}^{0,1}\oplus
{\mathcal A}^{1,0}$ of the Liouville symplectic form on $(T^{1,0}{\mathcal A}^{0,1})^*$
coincides with the one given by the pairing in \eqref{abp}. The two-form on
${\mathcal A}^{0,1}\times {\mathcal A}^{1,0}$ given by the pairing in \eqref{abp} will
be denoted by $\Omega'_{AB}$.

A Dolbeault operator on $V$ is a differential operator
$$
\overline{\partial}_1\, :\, V\, \longrightarrow \, V\otimes (T^{0,1}X)^*
$$
of order one satisfying the Leibniz condition which says that
$$
\overline{\partial}_1(fs)\,=\, f\cdot \overline{\partial}_1(s)+ s\otimes \overline{\partial}f\, ,
$$
where $s$ is any locally defined smooth section of $V$ and $f$ is any locally defined smooth
function on $X$. Let $\mathcal B$ denote the space of all Dolbeault operators on $V$. So
$\mathcal B$ is an affine space for the complex vector space ${\mathcal A}^{0,1}$.
Let $\mathcal G$ denote the space of all differential operators
$$
{\partial}_1\, :\, V\, \longrightarrow \, V\otimes (T^{1,0}X)^*
$$
of order one satisfying the condition that
$$
{\partial}_1(fs)\,=\, f\cdot {\partial}_1(s)+ s\otimes {\partial}f\, ,
$$
where $s$ is any locally defined smooth section of $V$ and $f$ is any locally defined smooth
function. So $\mathcal G$ is an affine space for the complex vector space
${\mathcal A}^{1,0}$ defined above.

Therefore, as before, the pairing in \eqref{abp} produces a $2$--form
on ${\mathcal B}\times {\mathcal G}$; this $2$--form on ${\mathcal B}\times {\mathcal G}$
will be denoted by $\Omega_{AB}$. This $\Omega_{AB}$ actually coincides with the $2$--form
$\Omega'_{AB}$ on ${\mathcal A}^{0,1}\times {\mathcal A}^{1,0}$, once we
identify ${\mathcal B}\times {\mathcal G}$ with ${\mathcal A}^{0,1}\times {\mathcal A}^{1,0}$
by fixing a point of ${\mathcal B}\times {\mathcal G}$.

The symplectic form $\Omega_{\mathcal C}$ on $\mathcal C$ is constructed from the above form
$\Omega_{AB}$ as follows.

For any $(\overline{\partial}_1,\, {\partial}_1)\, \in\, {\mathcal B}\times {\mathcal G}$,
note that ${\partial}_1+\overline{\partial}_1$ is a $C^\infty$ complex connection on the
vector bundle $V$. The curvature of the connection ${\partial}_1+\overline{\partial}_1$ will
be denoted by $({\partial}_1+\overline{\partial}_1)^2$. Define
$$
{\mathcal F}\, :=\, \{(\overline{\partial}_1,\, {\partial}_1)\, \in\,
{\mathcal B}\times {\mathcal G}\,\mid\, ({\partial}_1+\overline{\partial}_1)^2\,=\, 0\}\, .
$$
Restrict $\Omega_{AB}$ to $\mathcal F$. Let $\text{Aut}(V)$ denote the
group of all $C^\infty$ automorphisms of the vector bundle $V$ over the identity map
of $X$. The group $\text{Aut}(V)$ acts on $\mathcal F$ by inducing connections on $V$ from given
ones via automorphism of $V$. The above restricted form $\Omega_{AB}\vert_{\mathcal F}$ descends to a
$2$--form to the quotient under this action. The symplectic manifold
$({\mathcal C},\,\Omega_{\mathcal C})$ is given by this quotient
of $\mathcal F$ and the descended $2$--form on the quotient.

Since $\Omega_{AB}$ is given by the Liouville symplectic form $\Omega'_{AB}$ on 
${\mathcal A}^{0,1}\times {\mathcal A}^{1,0}$, we conclude that the identity in
\eqref{foch} holds.

In view of \eqref{foch0} and \eqref{foch}, the theorem follows from
Lemma \ref{lem1}, Lemma \ref{lem2} and Proposition \ref{prop1}.
\end{proof}

\section{Tyurin parametrization}

Fix a holomorphic vector bundle ${\mathcal E}_0$ on $X$ of rank $r$ and degree
$d+d_0$. Let
\begin{equation}\label{e0}
{\mathcal Q}\,:=\, {\mathcal Q}({\mathcal E}_0, d_0)
\end{equation}
denote the quot scheme parametrizing all torsion quotients of ${\mathcal E}_0$ of
degree $d_0$. On $X\times {\mathcal Q}$, there is a short exact sequence of coherent sheaves
\begin{equation}\label{e13}
0\, \longrightarrow\, {\mathcal K} \, \longrightarrow\,p^*_X {\mathcal E}_0 
\, \longrightarrow\, Q \, \longrightarrow\, 0\, ,
\end{equation}
where $p_X\, :\, X\times {\mathcal Q}\, \longrightarrow\, X$ is the natural projection and $Q$ is
the tautological torsion quotient on $X\times {\mathcal Q}$. We note that $\mathcal K$ is a
holomorphic family of vector bundles of rank $r$ degree $d$ on $X$ parametrized by $\mathcal Q$.
For any $z\, \in\, {\mathcal Q}$, let ${\mathcal K}^z\,:=\, {\mathcal K}\vert_{X\times\{z\}}$ be
the vector bundle on $X$ in this family corresponding to the point $z$. Let
\begin{equation}\label{e12}
{\mathcal Q}_s\,:=\, \{z\, \in\, {\mathcal Q} \, \mid\, {\mathcal K}^z \ \text{ is stable}\}
\, \subset\, \mathcal Q
\end{equation}
be the subset that parametrizes all the stable vector bundles in this family
parametrized by $\mathcal Q$. It is known
that ${\mathcal Q}_s$ is a Zariski open subset of $\mathcal Q$ \cite[p.~635, Theorem~2.8(B)]{Ma}
(see also \cite{Sh}). For a generic choice of ${\mathcal E}_0$
one can ensure that this Zariski open subset ${\mathcal Q}_s$ is non-empty.

We shall construct two holomorphic $T^*{\mathcal Q}_s$--torsors over ${\mathcal Q}_s$. We note that the dimension of 
${\mathcal Q}_s$ does not necessarily match that of the space $\mathcal M$, so it will not be merely a question of 
pulling back our two torsors over $\mathcal M$.

Let
\begin{equation}\label{xi}
\xi\,:\, {\mathcal Q}_s\, \longrightarrow\, {\mathcal M}
\end{equation}
be the classifying morphism for the family of stable vector bundles $\mathcal K$ in
\eqref{e13}. So for any $z\, \in\, {\mathcal Q}_s$, the point of $\mathcal M$
corresponding to the stable vector bundle ${\mathcal K}^z$ is $\xi(z)$. Let
\begin{equation}\label{xi2}
(d\xi)^*\, :\, \xi^*\Omega_{\mathcal M}\,=\, \xi^* T^*{\mathcal M} \, \longrightarrow\, 
T^*{\mathcal Q}_s
\end{equation}
be the homomorphism of cotangent bundles given by the dual of the differential
$d\xi\, :\, T{\mathcal Q}_s \, \longrightarrow\, \xi^* T{\mathcal M}$ of
the map $\xi$ constructed in \eqref{xi}. Let
$$
\xi^*{\mathcal C}\, \longrightarrow\, {\mathcal Q}_s
$$
be the pull-back to ${\mathcal Q}_s$ of the holomorphic fiber bundle $\phi\, :\, {\mathcal C}\,
\longrightarrow\,\mathcal M$ in \eqref{e3}. Consider the action of $\xi^* T^*{\mathcal M}$
on the fiber product $$T^*{\mathcal Q}_s \times_{{\mathcal Q}_s} \xi^*{\mathcal C}$$
under which each $v\, \in\, T^*_{\xi(y)} {\mathcal M}$, $y\, \in\, {\mathcal Q}_s$, sends
every $(w,\, u)\, \in\, T^*_y{\mathcal Q}_s\times \phi^{-1}(\xi(y))$ to
$$(w-(d\xi)^*(v),\, u+v)\, \in\, T^*_y{\mathcal Q}_s\times \phi^{-1}(\xi(y))\, .$$ Let
\begin{equation}\label{e14}
{\mathcal C}({\mathcal Q}_s)\, :=\,
(T^*{\mathcal Q}_s\times_{{\mathcal Q}_s} \xi^*{\mathcal C})/\xi^* T^*{\mathcal M}
\end{equation}
be the quotient for this action of $\xi^* T^*{\mathcal M}$ on $T^*{\mathcal Q}_s
\times_{{\mathcal Q}_s} \xi^*{\mathcal C}$. Let
\begin{equation}\label{e16}
\psi\, :\, {\mathcal C}({\mathcal Q}_s)\, \longrightarrow\,{\mathcal Q}_s
\end{equation}
be the natural projection given by the projections $T^*{\mathcal Q}_s\,\longrightarrow\,
{\mathcal Q}_s$ and $\xi^*{\mathcal C}\,\longrightarrow\,
{\mathcal Q}_s$.

{\sc Remark}: This quotienting appears in the paper \cite{Hu} in a different guise.

The translation action of $T^*{\mathcal Q}_s$
on itself and the trivial action of $T^*{\mathcal Q}_s$ on $\xi^*{\mathcal C}$ together
produce an action of $T^*{\mathcal Q}_s$ on $T^*{\mathcal Q}_s\times_{{\mathcal Q}_s}
\xi^*{\mathcal C}$. This action of $T^*{\mathcal Q}_s$ on $T^*{\mathcal Q}_s\times_{{\mathcal Q}_s}
\xi^*{\mathcal C}$ clearly descends to an action of $T^*{\mathcal Q}_s$ on the
quotient space ${\mathcal C}({\mathcal Q}_s)$ constructed in \eqref{e14}. It is straightforward
to check that this action of $T^*{\mathcal Q}_s$ on ${\mathcal C}({\mathcal Q}_s)$
makes $({\mathcal C}({\mathcal Q}_s),\, \psi)$ in \eqref{e16}
a holomorphic torsor on ${\mathcal Q}_s$ for $T^*{\mathcal Q}_s$.

Note that there is a natural map to ${\mathcal C}({\mathcal Q}_s)$ from the moduli space of
pairs of the form $(z,\, D)$, where
\begin{itemize}
\item $z\, \in\, {\mathcal Q}_s$, and

\item $D$ is a logarithmic connection on ${\mathcal K}^z$ (see \eqref{e12}) nonsingular on
$X\setminus\{x_0\}$ such that the residue of $D$ at $x_0$ is $-\frac{d}{r}\text{Id}_{E_{x_0}}$.
\end{itemize}
This map sends $(z,\, D)$ to the point of $\psi^{-1}(z)$ given by the pair
$(0,\, D)\, \in\, T^*{\mathcal Q}_s\times_{{\mathcal Q}_s} \xi^*{\mathcal C}$
(see \eqref{e14}). It may be mentioned that this map to ${\mathcal C}({\mathcal Q}_s)$
from the moduli space of pairs need not be injective or surjective in general. This map is injective
if the differential $d\xi(z)$ is surjective for all
$z\, \in\, {\mathcal Q}_s$, and it is surjective if $d\xi(z)$ is injective for all
$z\, \in\, {\mathcal Q}_s$.

To construct the second torsor for $T^*{\mathcal Q}_s$, let
$$
\widetilde{f}\,:\, X\times {\mathcal Q}\, \longrightarrow\, {\mathcal Q}
$$
be the projection to the second factor. For the family of holomorphic vector bundles
$\mathcal K$ in \eqref{e13} parametrized by $\mathcal Q$, consider the holomorphic line bundle constructed in \eqref{db}.
Let
\begin{equation}\label{l}
L\,:=\, (\det R^0\widetilde{f}_*{\mathcal K})^{\otimes -r}\otimes (\det R^1\widetilde{f}_*
{\mathcal K})^{\otimes r}
\otimes \det (s_{x_0}^* {\mathcal K})^{\chi} \,\longrightarrow\, {\mathcal Q}
\end{equation}
be this line bundle, where $s_{x_0}\, :\, {\mathcal Q}\,\longrightarrow\,
X\times {\mathcal Q}$ as in \eqref{db} is the section $y\, \longmapsto\, (x_0,\, y)$.
Construct
$$
\widetilde{\varphi}\, :\, {\rm Conn}(L)\, \longrightarrow\, {\mathcal Q}
$$
as in \eqref{e6}, so ${\rm Conn}(L)\, \subset\, \text{At}(L)^*$, where $\text{At}(L)
\, \longrightarrow\, {\mathcal Q}$
is the Atiyah bundle for $L$. We note that $({\rm Conn}(L),\,
\widetilde{\varphi})$ is a torsor over $\mathcal Q$ for $T^*{\mathcal Q}$.
Let
\begin{equation}\label{e15}
{\rm Conn}(L)_s\, :=\, \widetilde{\varphi}^{-1}({\mathcal Q}_s) \, \subset\, {\rm Conn}(L)
\end{equation}
be the Zariski open subset, where ${\mathcal Q}_s$ is the Zariski open subset in \eqref{e12}. Let
\begin{equation}\label{e19}
\varphi \, :\, {\rm Conn}(L)_s\, \longrightarrow\, {\mathcal Q}_s
\end{equation}
be the restriction of the map $\widetilde\varphi$ to ${\rm Conn}(L)_s$. Consequently,
$({\rm Conn}(L)_s,\, {\varphi})$ is a torsor over ${\mathcal Q}_s$ for $T^*{\mathcal Q}_s$.

Let
\begin{equation}\label{e17}
\widetilde{\delta}\, :\, {\mathcal C}({\mathcal Q}_s)\times_{{\mathcal Q}_s}
T^*{\mathcal Q}_s\, \longrightarrow\,
{\mathcal C}({\mathcal Q}_s)\ \ \text{ and }\ \ \widetilde{\eta}\, :\,
{\rm Conn}(L)_s\times_{{\mathcal Q}_s}
T^*{\mathcal Q}_s\, \longrightarrow\, {\rm Conn}(L)_s
\end{equation}
be the $T^*{\mathcal Q}_s$--torsor structures on ${\mathcal C}({\mathcal Q}_s)$ and ${\rm Conn}(L)_s$
constructed in \eqref{e16} and \eqref{e19} respectively. Let
\begin{equation}\label{e18}
{\textbf m}_1\, :\, T^*{\mathcal Q}_s\, \longrightarrow\,T^*{\mathcal Q}_s\, ,\ \ v\, \longmapsto\, 2r\cdot v
\end{equation}
be the multiplication by $2r$.

\begin{theorem}\label{thm2}
There is a canonical biholomorphic map
$$
\Phi\, :\, {\mathcal C}({\mathcal Q}_s)\, \longrightarrow\, {\rm Conn}(L)_s
$$
such that
\begin{enumerate}
\item $\varphi\circ\Phi\,=\, \psi$, where $\psi$ and $\varphi$ are the projections in
\eqref{e16} and \eqref{e19} respectively, and

\item $\Phi\circ\widetilde{\delta} \,=\, \widetilde{\eta} \circ (\Phi\times {\textbf m}_1)$,
where $\widetilde\delta$, $\widetilde\eta$ and
${\textbf m}_1$ are constructed in \eqref{e17} and \eqref{e18}.
\end{enumerate}
\end{theorem}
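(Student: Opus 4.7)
The plan is to reduce Theorem \ref{thm2} to Proposition \ref{prop1} via the functoriality of the determinant line bundle construction, together with a general description of the sheaf of connections of a pulled-back line bundle. First, I would identify $L\vert_{{\mathcal Q}_s}$ with $\xi^*{\mathcal L}$, where $\xi$ is the classifying map in \eqref{xi}. By the construction \eqref{db} of ${\mathcal L}$, pulling back via $\xi$ amounts to applying the formula \eqref{db} to the family ${\mathcal K}\vert_{X\times{\mathcal Q}_s}$ (which is the pull-back of a universal family under $\xi$ in a formal sense). By the definition \eqref{l}, this is precisely $L\vert_{{\mathcal Q}_s}$. In particular, the pull-back is canonical and does not involve the choice of a Poincar\'e bundle on $X\times{\mathcal M}$.

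Next, I would record the following elementary but crucial functorial description: for a holomorphic map $\xi\, :\, Y\, \longrightarrow\, Z$ of complex manifolds and a holomorphic line bundle ${\mathcal L}\, \longrightarrow\, Z$, there is a canonical biholomorphism of $T^*Y$--torsors
$$
{\rm Conn}(\xi^*{\mathcal L})\, \cong\, (\xi^*{\rm Conn}({\mathcal L})\times_Y T^*Y)/\xi^*T^*Z\, ,
$$
where $v\, \in\, \xi^*T^*Z$ acts by $(c,\, \tau)\, \longmapsto\, (c+v,\, \tau-(d\xi)^*v)$, and the surjection onto ${\rm Conn}(\xi^*{\mathcal L})$ sends $(c,\, \tau)$ to $\xi^*c + \tau$. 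This is obtained by comparing the Atiyah exact sequences for ${\mathcal L}$ and $\xi^*{\mathcal L}$; it realises ${\rm Conn}(L)_s$ in a form that matches the construction \eqref{e14} of ${\mathcal C}({\mathcal Q}_s)$ term-by-term, with ${\mathcal C}$ replaced by ${\rm Conn}({\mathcal L})$.

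With these two identifications in place, I would define $\Phi$ on representatives by
$$
\Phi([u,\, w])\, :=\, [F(u),\, 2r\cdot w]\, ,
$$
where $F\, :\, {\mathcal C}\, \longrightarrow\, {\rm Conn}({\mathcal L})$ is the biholomorphism of Proposition \ref{prop1}. Well-definedness on the quotient is exactly the point where property (3) of Proposition \ref{prop1} is used: for $v\, \in\, T^*_{\xi(y)}{\mathcal M}$,
$$
(F(u+v),\, 2r(w-(d\xi)^*v))\, =\, (F(u)+2r\cdot v,\, 2r\cdot w - (d\xi)^*(2r\cdot v))\, ,
$$
and the right hand side is equivalent to $(F(u),\, 2r\cdot w)$ under the $\xi^*T^*{\mathcal M}$--action on the target. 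The map $\Phi$ is a biholomorphism because $F$ is (its inverse is given by the analogous formula using $F^{-1}$ and $\frac{1}{2r}$). Property (1) is immediate from the construction (both sides project to $y$ on representatives). Property (2) is a direct check: on representatives the $T^*{\mathcal Q}_s$--action $\widetilde\delta$ sends $[u,\, w]$ to $[u,\, w+\tau']$, so $\Phi([u,\, w+\tau'])\,=\,[F(u),\, 2r\cdot w+2r\cdot \tau']\,=\,\widetilde\eta(\Phi([u,\, w]),\, {\textbf m}_1(\tau'))$.

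The main conceptual obstacle is setting up, rigorously and canonically, the two preliminary identifications, namely $L\vert_{{\mathcal Q}_s}\,\cong\, \xi^*{\mathcal L}$ and the quotient presentation of ${\rm Conn}(\xi^*{\mathcal L})$; both are functoriality statements, but the first requires a little care because ${\mathcal L}$ need not come from a universal bundle on $X\times{\mathcal M}$. Once they are in place, the proof is a formal matching of the two quotient descriptions via the map $F$, and all verifications are direct consequences of Proposition \ref{prop1} together with the definitions of the torsor actions in \eqref{e17}.
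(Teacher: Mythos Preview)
Your approach is essentially identical to the paper's: both identify $L\vert_{{\mathcal Q}_s}$ with $\xi^*{\mathcal L}$, describe ${\rm Conn}(L)_s$ as the quotient $(T^*{\mathcal Q}_s\times_{{\mathcal Q}_s}\xi^*{\rm Conn}({\mathcal L}))/\xi^*T^*{\mathcal M}$, and then define $\Phi$ by applying the map $F$ of Proposition~\ref{prop1} on representatives and checking that it descends. In fact your explicit formula $[u,\,w]\,\longmapsto\,[F(u),\,2r\cdot w]$ is more carefully stated than the paper's written $\widetilde{F}(a,b)\,=\,(a,\,\xi^*F(b))$, which as printed does not descend to the quotient (the factor $2r$ on the $T^*{\mathcal Q}_s$-component is needed precisely because $F(u+v)\,=\,F(u)+2r\cdot v$); so your version is the correct implementation of the paper's intended argument.
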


\begin{proof}
From the constructions of the map $\xi$ in \eqref{xi} and the line bundles $\mathcal L$ and 
$L$ (in \eqref{e4} and \eqref{l}) it follows that the line bundle $\xi^*{\mathcal L}$ is 
holomorphically identified with $L\vert_{{\mathcal Q}_s}$. In view of this identification of 
$L\vert_{{\mathcal Q}_s}$ with $\xi^*{\mathcal L}$, we conclude that the fiber bundle ${\rm
Conn}(L)_s$ in \eqref{e15} is constructed from ${\rm Conn}({\mathcal L})$ (see
\eqref{e6}) in the following way.

Consider the holomorphic fiber bundle $q_0\, :\, {\rm Conn}({\mathcal L})\, \longrightarrow\, {\mathcal M}$
in \eqref{e6}. Let
$$
\xi^*{\rm Conn}({\mathcal L})\, \longrightarrow\, {\mathcal Q}_s
$$
be the pull-back of it by the map $\xi$ in \eqref{xi}. Next consider the homomorphism
$(d\xi)^*$ in \eqref{xi2}. The holomorphic vector bundle
$\xi^* T^*{\mathcal M}$ acts
on the fiber product $$T^* {{\mathcal Q}_s}\times_{{\mathcal Q}_s} \xi^*{\rm Conn}({\mathcal L})$$
as follows: for every $y\, \in\, {\mathcal Q}_s$ and
every $v\, \in\, T^*_{\xi(y)} {\mathcal M}$, the action of $v$ sends any $(w,\, u)\, \in\,
T^*_y {{\mathcal Q}_s}\times q_0^{-1}(\xi(y))$ to $$(w-(d\xi)^*(v),\, u+v)
\, \in\, T^*_y {{\mathcal Q}_s}\times q_0^{-1}(\xi(y))\, .$$ Let
\begin{equation}\label{z}
{\mathcal Z}\,:=\,
(T^*{{\mathcal Q}_s}\times_{{\mathcal Q}_s} \xi^*{\rm Conn}({\mathcal L}))/\xi^* T^*{\mathcal M}
\end{equation}
be the quotient for this action. The projections $T^*{{\mathcal Q}_s}\, \longrightarrow\,
{{\mathcal Q}_s}$ and $\xi^*{\rm Conn}({\mathcal L}), \longrightarrow\,
{{\mathcal Q}_s}$ together produce a projection of ${\mathcal Z}$ to ${\mathcal Q}_s$.
The translation action of $T^*{{\mathcal Q}_s}$
on itself and the trivial action of $T^*{{\mathcal Q}_s}$ on $\xi^*{\rm Conn}({\mathcal L})$ together
produce an action of $T^*{{\mathcal Q}_s}$ on
$T^*{{\mathcal Q}_s}\times_{{\mathcal Q}_s} \xi^*{\rm Conn}({\mathcal L})$. This action
in turn produces an action of $T^*{{\mathcal Q}_s}$ on the quotient space ${\mathcal Z}$ constructed
in \eqref{z}. This action of $T^*{{\mathcal Q}_s}$ on $\mathcal Z$
makes $\mathcal Z$ a holomorphic torsor on ${\mathcal Q}_s$ for $T^*{{\mathcal Q}_s}$. It should
be emphasized that this $T^*{{\mathcal Q}_s}$--torsor
$\mathcal Z$ is holomorphically identified, in a natural way, with the $T^*{{\mathcal Q}_s}$--torsor
${\rm Conn}(L)_s$ constructed in \eqref{e15}.

The biholomorphism $\Phi$ in the statement of the theorem is constructed by comparing the above description
of ${\rm Conn}(L)_s$ with the construction of ${\mathcal C}({\mathcal Q}_s)$ in \eqref{e14}. To see this,
consider the map
$$
\widetilde{F}\, :\, T^*{{\mathcal Q}_s}\times_{{\mathcal Q}_s} \xi^*{\mathcal C}\, \longrightarrow\,
T^*{{\mathcal Q}_s}\times_{{\mathcal Q}_s} \xi^*{\rm Conn}({\mathcal L})\, , \ \ (a,\, b)\,
\longmapsto\, (a,\, \xi^* F(b))\, ,
$$
where $F$ is the biholomorphism in Proposition \ref{prop1}; note that
$F$ induces a map of fiber bundles over ${\mathcal Q}_s$
$$\xi^* F\, :\, \xi^*{\mathcal C}\, \longrightarrow\,
\xi^*{\rm Conn}({\mathcal L})$$ which is uniquely determined by the following
commutative diagram:
$$
\begin{matrix}
\xi^*{\mathcal C} & \stackrel{\xi^* F}{\longrightarrow} &\xi^*{\rm Conn}({\mathcal L})\\
\Big\downarrow && \Big\downarrow\\
{\mathcal C} & \stackrel{F}{\longrightarrow} & {\rm Conn}({\mathcal L})
\end{matrix}
$$
(the vertical maps are the canonical ones).
This map $\widetilde{F}$ descends to a map between the quotient spaces
$$
\Phi\, :\, {\mathcal C}({\mathcal Q}_s)\, \longrightarrow\, {\rm Conn}(L)_s\,=\, {\mathcal Z}
$$
in \eqref{z} and \eqref{e14}. From the properties of $F$ in Proposition \ref{prop1} it follows that
$\Phi$ satisfies the two conditions in the theorem.
\end{proof}

The pulled back holomorphic line bundle $\varphi^*L\, \longrightarrow\, {\rm Conn}(L)_s$,
where $\varphi$ is the projection in \eqref{e19},
has a tautological holomorphic connection; this tautological holomorphic connection on
$\varphi^*L$ will be denoted by $D_{\varphi^*L}$. The curvature of $D_{\varphi^*L}$, which
will be denoted by $\widetilde\Theta$, is a holomorphic symplectic form on ${\rm Conn}(L)_s$.
Consider the biholomorphism $\Phi$ in Theorem \ref{thm2}. Let
\begin{equation}\label{sf}
\Theta\, :=\, \frac{1}{2r} \Phi^* \widetilde{\Theta}
\end{equation}
be the holomorphic symplectic form on ${\mathcal C}({\mathcal Q}_s)$.

The above construction is summarized in the following lemma.

\begin{lemma}\label{lem3}
The complex manifold ${\mathcal C}({\mathcal Q}_s)$ is equipped with a natural holomorphic symplectic
form $\Theta$ constructed in \eqref{sf}. The fibers of the projection $\psi$ in \eqref{e16}
are Lagrangian with respect to $\Theta$. The form $\Theta$ is compatible with the
$T^*{\mathcal Q}_s$--torsor structure on ${\mathcal C}({\mathcal Q}_s)$ in the following
way: If $s\, :\, U\, \longrightarrow\, {\mathcal C}({\mathcal Q}_s)$ is a holomorphic 
section of the fibration $\psi$ over an open subset $U\, \subset\, {\mathcal Q}_s$,
and $\omega$ is a holomorphic $1$--form on $U$, then for the holomorphic section
$$
s_\omega\, :\, U\, \longrightarrow\, {\mathcal C}({\mathcal Q}_s)\, , \ \ z\, \longmapsto\,
s(z) + \omega(z)\, ,
$$
the equation
$$
s^*_\omega \Theta \,=\, s^*\Theta + d\omega
$$
holds.
\end{lemma}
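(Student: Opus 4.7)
The plan is to transfer all three assertions from the analogous statements for $\widetilde\Theta$ on ${\rm Conn}(L)_s$ via the biholomorphism $\Phi$ of Theorem \ref{thm2}; the whole point is that Theorem \ref{thm2} has already done the structural work of identifying the two torsors.

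First I would verify that $\widetilde\Theta$ is a holomorphic symplectic form on ${\rm Conn}(L)_s$. This is the same local computation as given in the paragraph preceding Theorem \ref{thm1}: over an open $U \subset {\mathcal Q}_s$ on which $L$ is holomorphically trivialized, the inverse image $\varphi^{-1}(U)$ gets identified with $T^*U$ (sending the trivial connection to the zero section), the tautological connection $D_{\varphi^*L}$ becomes the Liouville $1$-form on $T^*U$, and hence $\widetilde\Theta$ restricts to the canonical symplectic form on $T^*U$, which is closed and nondegenerate. Since $\Phi$ is a biholomorphism and $2r\neq 0$, formula \eqref{sf} then shows that $\Theta$ is a holomorphic symplectic form on ${\mathcal C}({\mathcal Q}_s)$.

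For the Lagrangian property, I would observe that in the local cotangent-bundle model above the fibers of $\varphi$ become the cotangent fibers of $T^*U$, which are Lagrangian for the canonical symplectic form. Because condition (1) of Theorem \ref{thm2} says $\varphi\circ\Phi = \psi$, the biholomorphism $\Phi$ sends each fiber of $\psi$ onto a fiber of $\varphi$, so the fibers of $\psi$ are Lagrangian for $\Theta = \frac{1}{2r}\Phi^*\widetilde\Theta$.

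For the compatibility with the torsor structure, the key input is the analogue of \eqref{foch0} for $\widetilde\Theta$ on ${\rm Conn}(L)_s$, which holds by precisely the same local cotangent-bundle argument: for any local section $\widetilde s$ of $\varphi$ and holomorphic $1$-form $\theta$, $(\widetilde s + \theta)^*\widetilde\Theta = \widetilde s^{\,*}\widetilde\Theta + d\theta$. Given $s$ and $\omega$ as in the statement, condition (2) of Theorem \ref{thm2} together with the definition of ${\textbf m}_1$ in \eqref{e18} translates addition-by-$\omega$ in the torsor ${\mathcal C}({\mathcal Q}_s)$ into addition-by-$2r\omega$ in the torsor ${\rm Conn}(L)_s$, so $\Phi\circ s_\omega = (\Phi\circ s) + 2r\omega$. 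Applying the analogue of \eqref{foch0} with $\widetilde s = \Phi\circ s$ and $\theta = 2r\omega$ and then dividing by $2r$ as dictated by \eqref{sf} gives the desired identity $s_\omega^*\Theta = s^*\Theta + d\omega$. The only thing to be careful about is bookkeeping: the factor of $2r$ appears once in \eqref{sf} and once in the intertwining condition of Theorem \ref{thm2}(2), and the two cancel exactly to produce the clean formula with no residual scaling; this is the ``main obstacle,'' but it is entirely mechanical.
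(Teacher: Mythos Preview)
Your proposal is correct and follows essentially the same approach as the paper: establish the Lagrangian and compatibility properties for $\widetilde\Theta$ on ${\rm Conn}(L)_s$ (the latter via the argument of \eqref{foch0}), then transfer them to $\Theta$ through the biholomorphism $\Phi$ using the two conditions in Theorem~\ref{thm2}. Your write-up is in fact more explicit than the paper's about how the $2r$ from \eqref{sf} cancels against the $2r$ in the intertwining relation $\Phi\circ s_\omega=(\Phi\circ s)+2r\omega$, but the underlying argument is the same.
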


\begin{proof}
For the symplectic form $\widetilde{\Theta}$ on ${\rm Conn}(L)_s$, the fibers
of $\varphi$ (see \eqref{e19}) are Lagrangian. 

If $s\, :\, U\, \longrightarrow\, {\rm Conn}(L)_s$ is a holomorphic 
section of the fibration $\varphi$ over an open subset $U\, \subset\, {\mathcal Q}_s$,
and $\omega$ is a holomorphic $1$--form on $U$, then for the holomorphic section
$$
s_\omega\, :\, U\, \longrightarrow\, {\rm Conn}(L)_s\, , \ \ z\, \longmapsto\,
s(z) + \omega(z)\, ,
$$
the equation
$$
s^*_\omega \widetilde{\Theta} \,=\, s^*\widetilde{\Theta} + d\omega
$$
holds; see \eqref{foch0}.

Therefore, from Theorem \ref{thm2} we conclude that $\Theta$
has the properties stated in the lemma.
\end{proof}

\begin{remark}\label{rem2}
Note that the $T^*{\mathcal Q}_s$--torsor ${\mathcal C}({\mathcal Q}_s)$ does not
have any natural extension to a $T^*{\mathcal Q}$--torsor over the larger
variety ${\mathcal Q}$ in \eqref{e12}. Indeed, for some $z\, \in\, {\mathcal Q}\setminus
{\mathcal Q}_s$ the vector bundle ${\mathcal K}^z$ in \eqref{e12} may not have any
logarithmic/holomorphic connection satisfying the residue condition in the definition
on $\mathcal C$ (see \eqref{e2}). However, the other $T^*{\mathcal Q}_s$--torsor, namely
${\rm Conn}(L)_s$, has a canonical extension to a $T^*{\mathcal Q}$--torsor over the larger
variety ${\mathcal Q}$. Indeed, ${\rm Conn}(L)$ is the canonical extension of
${\rm Conn}(L)_s$ (see \eqref{e15}). Note that the symplectic structure on ${\rm Conn}(L)_s$
also extends along this extension.
\end{remark}

\section{Framed bundles and meromorphic connections}

\subsection{Framed bundles}

Fix a nonzero effective divisor
\begin{equation}\label{bs}
{\mathbb S}\,=\, \sum_{i=1}^n n_ix_i
\end{equation}
on $X$; so $x_i\, \in\, X$, $n\,\geq\, 1$ and $n_i\,\geq\, 1$
for all $1\,\leq\, i\,\leq\,n$. For
notational convenience, $V\otimes {\mathcal O}_X(-{\mathbb S})$ and $V\otimes {\mathcal O}_X({\mathbb S})$,
where $V$ is any coherent analytic sheaf on $X$, will be denoted by $V(-{\mathbb S})$ and $V({\mathbb S})$
respectively.

A framed vector bundle is a holomorphic vector bundle $E$ on $X$ together with
an isomorphism of ${\mathcal O}_X$--modules $\sigma \,:\, E\vert_{\mathbb S} \, \longrightarrow\,
{\mathcal O}^{\oplus r}_{\mathbb S}$, where $r\,=\, \text{rank}(E)$. The space of infinitesimal
deformations of a framed bundle $(E, \, \sigma)$ are parametrized by $H^1(X,\, \text{End}(E)
(-{\mathbb S}))$. Consider the short exact sequence of coherent analytic sheaves
$$
0\, \longrightarrow\, \text{End}(E) (-{\mathbb S})\, \longrightarrow\, \text{End}(E)
\, \longrightarrow\, \text{End}(E)\vert_{\mathbb S} \, \longrightarrow\, 0
$$
on $X$. Let
$$
\, \longrightarrow\, H^0(X,\, \text{End}(E)\vert_{\mathbb S}) \, \stackrel{h_1}{\longrightarrow}\,
H^1(X,\, \text{End}(E)(-{\mathbb S}))\, \stackrel{h_2}{\longrightarrow}\, H^1(X,\,
\text{End}(E)) \, \longrightarrow\, 0
$$
be the corresponding long exact sequence of cohomologies. The homomorphism $h_1$ in this long exact
sequence corresponds to deforming the framing $\sigma$ keeping the vector bundle $E$ fixed, while the
other homomorphism $h_2$ is the forgetful map that sends an infinitesimal
deformation of $(E, \, \sigma)$ to the infinitesimal deformation of $E$ given by it by simply
forgetting the framing. Note that by Serre duality,
\begin{equation}\label{e22}
H^1(X,\, \text{End}(E)(-{\mathbb S}))^* \,=\, H^0(X,\, \text{End}(E)\otimes K_X({\mathbb S}))\, .
\end{equation}

A meromorphic connection on $E$ is a holomorphic differential operator of order one
$$
D\, :\, E \, \longrightarrow\, E\otimes K_X({\mathbb S})
$$
that satisfies the Leibniz identity which says that
$$
D(fs)\,=\, f\cdot D(s) +s \otimes \partial f\, ,
$$
where $s$ is any locally defined holomorphic section of $E$ and $f$ is any locally defined
holomorphic function on $X$.

A \textit{framed meromorphic connection} is a triple of the form $(E,\, \sigma, \, D)$, where 
$(E,\, \sigma)$ is a framed bundle and $D$ is a meromorphic connection on $E$.

Let 
\begin{equation}\label{e20}
{\mathcal N}_X(r,d)\, =:\, {\mathcal N}
\end{equation}
be the moduli space of all isomorphism classes framed vector bundle $(E,\, \sigma)$ of rank $r$ and
degree $d$ such that the underlying vector bundle $E$ is stable. Let
\begin{equation}\label{fl}
{\mathcal D}_X(r,d)\, =:\, {\mathcal D}
\end{equation}
denote the moduli space of isomorphism classes of framed meromorphic connections
$(E,\, \sigma, \, D)$ such that
\begin{enumerate}
\item $E$ is a stable vector bundle of rank $r$ and degree $d$,

\item $\sigma$ is a framing on $E$ over $\mathbb S$, and

\item $D$ is a meromorphic connection on $E$ whose polar part has support contained in $\mathbb S$.
\end{enumerate}
Let
\begin{equation}\label{e21}
\widehat{\phi}\, :\, {\mathcal D}\, \longrightarrow\, {\mathcal N}\, , \ \
(E,\, \sigma, \, D)\, \longmapsto\, (E,\, \sigma)
\end{equation}
be the forgetful map that simply forgets the meromorphic connection. Since
the divisor $\mathbb S$ is nonzero, it can be shown that any stable vector bundle admits a
meromorphic connection. Indeed, if $E$ is a stable vector bundle on $X$ of rank $r$ and
degree $d$, and $x_1\, \in\, {\mathbb D}$, then $E$ admits a logarithmic connection
nonsingular over $X\setminus \{x_1\}$ whose monodromy is unitary and its residue at
$x_1$ is $-\frac{d}{r}\text{Id}_{E_{x_1}}$ \cite{NS}.

The space of all meromorphic connections on $E$ is an affine space for the vector space 
$H^0(X,\, \text{End}(E)\otimes K_X({\mathbb S}))$. Hence using \eqref{e22} it follows that 
$({\mathcal D},\, \widehat{\phi})$ is a holomorphic $T^*{\mathcal N}$--torsor over ${\mathcal N}$,
where ${\mathcal D}$ and $\widehat{\phi}$ are constructed in \eqref{fl} and \eqref{e21} respectively.

Let
\begin{equation}\label{vp}
\varpi\, : \, {\mathcal N}\, \longrightarrow\, {\mathcal M}
\end{equation}
be the projection
defined by $(E,\, \sigma)\, \longmapsto\, E$. Consider the line bundle 
${\mathcal L}$ in \eqref{e4} constructed by setting the base point $x_0$ to be
the point $x_1$ in \eqref{bs}. Let
$$
\varpi^* {\mathcal L}\, \longrightarrow\, {\mathcal N}
$$
be its pullback to ${\mathcal N}$ by the map in \eqref{vp}. Construct
\begin{equation}\label{q1}
q_1\, :\, {\rm Conn}(\varpi^* {\mathcal L})\, \longrightarrow\, {\mathcal N}
\end{equation}
as in \eqref{e6} from the Atiyah bundle $\text{At}(\varpi^* {\mathcal L})$. We note that
$({\rm Conn}(\varpi^* {\mathcal L}),\, q_1)$ is a torsor over $\mathcal N$ for $T^*{\mathcal N}$.

Let
\begin{equation}\label{e23}
\widehat{\delta}\, :\, {\mathcal D}\times_{\mathcal N}
T^*{\mathcal N}\, \longrightarrow\, {\mathcal D}\ \ \text{ and }\ \ \widetilde{\eta}\, :\,
{\rm Conn}(\varpi^* {\mathcal L})\times_{\mathcal N}
T^*{\mathcal N} \, \longrightarrow\, {\rm Conn}(\varpi^* {\mathcal L})
\end{equation}
be the $T^*{\mathcal N}$--torsor structures on $\mathcal D$ and ${\rm Conn}(\varpi^* {\mathcal L})$
respectively. Let
\begin{equation}\label{e24}
{\textbf m}_2\, :\, T^*{\mathcal N}\, \longrightarrow\,T^*{\mathcal N}\, ,\ \ v\, \longmapsto\, 2r\cdot v
\end{equation}
be the multiplication by $2r$.

\begin{theorem}\label{thm3}
There is a canonical biholomorphic map
$$
\Psi\, :\, {\mathcal D}\, \longrightarrow\, {\rm Conn}(\varpi^* {\mathcal L})
$$
such that
\begin{enumerate}
\item $q_1\circ\Psi\,=\, \widehat{\phi}$, where $\widehat{\phi}$ and $q_1$ are the
projections in \eqref{e21} and \eqref{q1} respectively, and

\item $\Psi\circ\widehat{\delta} \,=\, \widehat{\eta} \circ (\Psi\times {\textbf m}_2)$,
where $\widehat\delta$, $\widehat\eta$ and
${\textbf m}_2$ are constructed in \eqref{e23} and \eqref{e24}.
\end{enumerate}
\end{theorem}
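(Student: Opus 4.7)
The plan is to imitate the proof of Theorem \ref{thm2}, reducing everything to Proposition \ref{prop1} via the forgetful projection $\varpi\,:\,\mathcal{N}\to\mathcal{M}$ from \eqref{vp}. Both $\mathcal{D}$ and $\mathrm{Conn}(\varpi^*\mathcal{L})$ will be realized as quotients of fiber products over $\mathcal{N}$ built from $\mathcal{C}$ and $\mathrm{Conn}(\mathcal{L})$ pulled back via $\varpi$, and the biholomorphism $F$ from Proposition \ref{prop1} will then induce the desired map $\Psi$.

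First, following the construction in the proof of Theorem \ref{thm2} verbatim (with $\varpi$ in place of $\xi$), I identify
$$\mathrm{Conn}(\varpi^*\mathcal{L})\,=\,(T^*\mathcal{N}\times_{\mathcal{N}}\varpi^*\mathrm{Conn}(\mathcal{L}))/\varpi^*T^*\mathcal{M},$$
where $v\in T^*_{\varpi(y)}\mathcal{M}$ acts by $(w,u)\mapsto(w-(d\varpi)^*v,\,u+v)$. Next I describe $\mathcal{D}$ analogously: for $(E,\sigma)\in\mathcal{N}$, the fiber of $\mathcal{D}$ is an affine space for $H^0(X,\mathrm{End}(E)\otimes K_X(\mathbb{S}))=T^*_{(E,\sigma)}\mathcal{N}$ by \eqref{e22}, while the fiber of $\varpi^*\mathcal{C}$ sits inside as an affine subspace --- any logarithmic connection at $x_0=x_1$ with the prescribed residue is in particular a meromorphic connection with poles supported in $\mathbb{S}$. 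This yields a natural surjection
$$T^*\mathcal{N}\times_{\mathcal{N}}\varpi^*\mathcal{C}\,\longrightarrow\,\mathcal{D},\qquad (w,(E,\sigma,D'))\,\longmapsto\,(E,\sigma,D'+w),$$
whose fibers are precisely the orbits of the $\varpi^*T^*\mathcal{M}$-action given by the same formula as above. The key point is that the natural inclusion $H^0(\mathrm{End}(E)\otimes K_X)\hookrightarrow H^0(\mathrm{End}(E)\otimes K_X(\mathbb{S}))$ is, under Serre duality and \eqref{e22}, precisely the map $(d\varpi)^*$; this makes adding $v\in T^*_E\mathcal{M}$ to $D'$ and simultaneously subtracting $(d\varpi)^*v$ from $w$ preserve the image $D'+w$. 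Consequently $\mathcal{D}=(T^*\mathcal{N}\times_{\mathcal{N}}\varpi^*\mathcal{C})/\varpi^*T^*\mathcal{M}$.

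Third, I use $F$ from Proposition \ref{prop1} to define the map on representatives by
$$(w,(E,\sigma,D'))\,\longmapsto\,(2r\cdot w,\,(E,\sigma,\varpi^*F(E,D'))).$$
Item (3) of Proposition \ref{prop1} says that $F$ intertwines the cotangent action with multiplication by $2r$, so this descends to a well-defined map $\Psi\,:\,\mathcal{D}\to\mathrm{Conn}(\varpi^*\mathcal{L})$: the equivalence $(w,b)\sim(w-(d\varpi)^*v,\,b+v)$ on the left is carried to $(2rw,\varpi^*F(b))\sim(2rw-(d\varpi)^*(2rv),\,\varpi^*F(b)+2rv)$ on the right, which is the standard equivalence with parameter $2rv$. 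The two listed properties of $\Psi$ follow at once: commutation with the projections to $\mathcal{N}$ is built into the quotient descriptions, and the $2r$-equivariance of the torsor structures is built into the definition. Holomorphicity and bijectivity of $\Psi$ are inherited from those of $F$.

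The main obstacle I expect is pinning down Step~2 cleanly, namely identifying the $T^*\mathcal{N}$-torsor structure of $\mathcal{D}$ with the one coming from the quotient. Concretely, this amounts to verifying that the inclusion of ``residue-prescribed logarithmic deformations of $D'$'' into ``meromorphic deformations of $D'$ with poles in $\mathbb{S}$'', i.e.\ the natural map $H^0(\mathrm{End}(E)\otimes K_X)\hookrightarrow H^0(\mathrm{End}(E)\otimes K_X(\mathbb{S}))$, really is Serre dual to the differential $d\varpi\,:\,H^1(\mathrm{End}(E)(-\mathbb{S}))\to H^1(\mathrm{End}(E))$ (which is the map $h_2$ from the long exact sequence recalled earlier). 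Once this compatibility is in place, the remainder of the argument is a formal parallel of the proof of Theorem \ref{thm2}.
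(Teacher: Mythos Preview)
Your proposal is correct and matches the paper's approach exactly: both torsors are realized as quotients $(T^*\mathcal{N}\times_{\mathcal{N}}\varpi^*\mathcal{T})/\varpi^*T^*\mathcal{M}$ for $\mathcal{T}\in\{\mathcal{C},\,\mathrm{Conn}(\mathcal{L})\}$, and $F$ from Proposition~\ref{prop1} induces $\Psi$. The obstacle you flag is not a real difficulty---the map $h_2$ in the displayed long exact sequence \emph{is} the differential $d\varpi$ at $(E,\sigma)$, so functoriality of Serre duality identifies $(d\varpi)^*$ with the inclusion $H^0(\mathrm{End}(E)\otimes K_X)\hookrightarrow H^0(\mathrm{End}(E)\otimes K_X(\mathbb{S}))$; and your insertion of the factor $2r$ in the first coordinate of the map on representatives is indeed the correct normalization for it to descend and to produce the stated $\mathbf{m}_2$-equivariance.
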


\begin{proof}
Let $(d\varpi)^*\, : \, \varpi^* T^* {\mathcal M}\, \longrightarrow\, T^*{\mathcal N}$
be the dual of the differential
$$d\varpi\, :\, T{\mathcal N}\, \longrightarrow\,\varpi^* T{\mathcal M}$$
of the map $\varpi$ in \eqref{vp}. Using it, the pullback,
to ${\mathcal N}$, of a $T^* {\mathcal M}$--torsor on ${\mathcal M}$ produces a
$T^*{\mathcal N}$--torsor on ${\mathcal N}$. To give more details of this construction,
let $\mathcal T$ be a $T^* {\mathcal M}$--torsor on ${\mathcal M}$. Consider
the fiber product
$$
(T^* {\mathcal N})\times_{\mathcal N} \varpi^*{\mathcal T}\, \longrightarrow\, {\mathcal N}\, .
$$
Now $\varpi^* T^* {\mathcal M}$ acts on it as follows: for any $z \,\in\, {\mathcal N}$ and
$w\,\in\, T^*_{\varpi(z)}{\mathcal M}$, the action of $w$ sends $(a,\, b)\,\in\, (T^*_z {\mathcal N})
\times {\mathcal T}_{\varpi(z)}$ to $(a-(d\varpi)^*(w),\, b+w)\,\in\,
(T^*_z {\mathcal N})\times {\mathcal T}_{\varpi(z)}$. The quotient
$$
((T^* {\mathcal N})\times_{\mathcal N} \varpi^*{\mathcal T})/\varpi^* T^* {\mathcal M}
\, \longrightarrow\, \mathcal N
$$
is a $T^*{\mathcal N}$--torsor.

Now, ${\mathcal D}$ is identified with the $T^*{\mathcal N}$--torsor
on ${\mathcal N}$ given by the $T^*{\mathcal M}$--torsor ${\mathcal C}$ on
${\mathcal M}$ in \eqref{e3}, while ${\rm Conn}(\varpi^* {\mathcal L})$
is identified with the $T^*{\mathcal N}$--torsor on ${\mathcal N}$ given by the
$T^*{\mathcal M}$--torsor ${\rm Conn}({\mathcal L})$ on ${\mathcal M}$ in
\eqref{e6}. Consequently, the biholomorphism $F$ in Proposition \ref{prop1}
produces the isomorphism $\Psi$ in the statement of the theorem.
\end{proof}

We recall that the pulled back holomorphic line bundle 
$$
q^*_1\varpi^* {\mathcal L}\, \longrightarrow\, {\rm Conn}(\varpi^* {\mathcal L})\, ,
$$
where $q_1$ and $\varpi$ are the maps in \eqref{q1} and \eqref{vp} respectively,
has a tautological holomorphic connection whose curvature is a holomorphic symplectic
form on ${\rm Conn}(\varpi^* {\mathcal L})$. Let $\widetilde{\Theta}_{\mathcal N}$ denote
this holomorphic symplectic form on ${\rm Conn}(\varpi^* {\mathcal L})$.

Theorem \ref{thm3} gives the following:

\begin{corollary}\label{cor1}
The pulled back form $$\Theta_{\mathcal N}\, := \,\frac{1}{2r}\Psi^*\widetilde{\Theta}_{\mathcal N}$$
is a holomorphic symplectic structure on ${\mathcal D}$. The fibers of the projection $\widehat{\phi}$ \eqref{e21}
are Lagrangian with respect to this symplectic form $\Theta_{\mathcal N}$. The form
$\Theta_{\mathcal N}$ is compatible with the
$T^*{\mathcal N}$--torsor structure on ${\mathcal D}$ in the following
way: If $s\, :\, U\, \longrightarrow\, {\mathcal D}$ is any holomorphic section of the
fibration $\widehat{\phi}$ over an open subset $U\, \subset\, {\mathcal N}$, and
$\omega$ is any holomorphic $1$--form on $U$, then for the holomorphic section
$$
s_\omega\, :\, U\, \longrightarrow\, {\mathcal D}\, , \ \ z\, \longmapsto\,
s(z) + \omega(z)\, ,
$$
the equation
$$
s^*_\omega \Theta_{\mathcal N} \,=\, s^*\Theta_{\mathcal N} + d\omega
$$
holds.
\end{corollary}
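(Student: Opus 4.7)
The plan is to transport all three properties from $\widetilde{\Theta}_{\mathcal N}$ on ${\rm Conn}(\varpi^* {\mathcal L})$ to $\Theta_{\mathcal N}$ on ${\mathcal D}$ via the biholomorphism $\Psi$ supplied by Theorem \ref{thm3}, following the same pattern as the proof of Lemma \ref{lem3}. The scaling factor $1/(2r)$ appearing in the definition of $\Theta_{\mathcal N}$ is chosen precisely to cancel the factor $2r$ in the $T^*{\mathcal N}$--equivariance of $\Psi$ (condition (2) of Theorem \ref{thm3}), so no extra constant will appear in the compatibility identity.

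First I would verify that $\widetilde{\Theta}_{\mathcal N}$ itself is a holomorphic symplectic form on ${\rm Conn}(\varpi^* {\mathcal L})$ and that the fibers of $q_1$ are Lagrangian. Both assertions follow from the same local-trivialization argument already used for $\Omega_{\mathcal L}$ in Section 3: choosing a local holomorphic trivialization of $\varpi^* {\mathcal L}$ over an open subset $U\subset {\mathcal N}$ identifies $q_1^{-1}(U)$ with $T^*U$ in a way that sends the tautological connection to the Liouville $1$--form; consequently $\widetilde{\Theta}_{\mathcal N}\vert_{q_1^{-1}(U)}$ agrees with the standard holomorphic symplectic form on $T^*U$, and the cotangent fibers are manifestly Lagrangian. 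Since $\Psi$ is a biholomorphism, $\Theta_{\mathcal N}$ is immediately a holomorphic symplectic form on ${\mathcal D}$; and because condition (1) of Theorem \ref{thm3} gives $q_1\circ\Psi \,=\, \widehat{\phi}$, the map $\Psi$ carries each fiber of $\widehat{\phi}$ biholomorphically onto a fiber of $q_1$, so the Lagrangian property transfers.

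The last, and only mildly subtle, step is the torsor-compatibility identity. I would first record the analog of \eqref{foch0} on ${\rm Conn}(\varpi^* {\mathcal L})$: for any holomorphic section $\widetilde s$ of $q_1$ over $U$ and any holomorphic $1$--form $\omega$ on $U$,
\[
\widetilde s_\omega^*\widetilde{\Theta}_{\mathcal N} \,=\, \widetilde s^*\widetilde{\Theta}_{\mathcal N} + d\omega\, ,
\]
which is precisely \eqref{foch0} applied to the line bundle $\varpi^* {\mathcal L}$. Now given a holomorphic section $s$ of $\widehat{\phi}$ and a holomorphic $1$--form $\omega$ on $U\subset {\mathcal N}$, the equivariance $\Psi\circ\widehat{\delta} \,=\, \widehat{\eta}\circ(\Psi\times {\textbf m}_2)$ rewrites $\Psi\circ s_\omega$ as the section $\Psi\circ s$ translated by $2r\cdot\omega$ under the $T^*{\mathcal N}$--action on ${\rm Conn}(\varpi^* {\mathcal L})$. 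Applying the identity above yields
\[
s_\omega^*\Theta_{\mathcal N} \,=\, \frac{1}{2r}\bigl[(\Psi\circ s)^*\widetilde{\Theta}_{\mathcal N} + d(2r\omega)\bigr] \,=\, s^*\Theta_{\mathcal N} + d\omega\, ,
\]
so the factor $1/(2r)$ absorbs the $2r$ exactly. There is no serious obstacle; the argument is a direct transport of structure that mirrors Lemma \ref{lem3} verbatim, the only bookkeeping being the consistent placement of the factor $2r$.
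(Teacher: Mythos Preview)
Your proposal is correct and follows essentially the same approach as the paper, which simply asserts that since $\widetilde{\Theta}_{\mathcal N}$ has the two stated properties, Theorem \ref{thm3} transfers them to $\Theta_{\mathcal N}$. You have merely spelled out the details of that transfer---including the bookkeeping of the $2r$ factor---which the paper leaves implicit.
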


\begin{proof}
Since $\widetilde{\Theta}_{\mathcal N}$ has the above two properties, it follows from
Theorem \ref{thm3} that $\Theta_{\mathcal N}$ also has these two properties.
\end{proof}

\section{Tyurin parametrization with framings and meromorphic connections}

Consider ${\mathcal Q}_s$ constructed in \eqref{e12}. Let ${\mathcal Q}^F_s$ be the moduli space
of pairs $(z,\, \sigma)$, where
\begin{itemize}
\item $z\, \in\, {\mathcal Q}_s$, and

\item $\sigma$ is a framing, over $\mathbb S$, on the holomorphic vector bundle
${\mathcal K}^z$ (see \eqref{e12}).
\end{itemize}
Let
\begin{equation}\label{vpp}
\varpi'\, :\, {\mathcal Q}^F_s\, \longrightarrow\, {\mathcal Q}_s\, , \ \
(z,\, \sigma)\, \longmapsto\, z
\end{equation}
be the projection.

We shall construct two $T^*{\mathcal Q}^F_s$--torsors over ${\mathcal Q}^F_s$.

To construct the first $T^*{\mathcal Q}^F_s$--torsor, note that there is a natural morphism
\begin{equation}\label{e25}
\varpi^F\,:\, {\mathcal Q}^F_s\, \longrightarrow\, {\mathcal N}\, ,\ \
(z,\, \sigma)\, \longmapsto\, ({\mathcal K}^z,\, \sigma)\, ,
\end{equation}
so $\varpi\circ\varpi^F\,=\,\xi\circ\varpi'$, where $\varpi$ and $\xi$ are constructed in 
\eqref{vp} and \eqref{xi} respectively. Let $$(d\varpi^F)^*\,:\, (\varpi^F)^* T^*{\mathcal N}\, 
\longrightarrow\, T^*{\mathcal Q}^F_s$$ be the dual of the differential
$$
d\varpi^F\, :\, T{\mathcal Q}^F_s\, \longrightarrow\, (\varpi^F)^*T{\mathcal N}
$$
of the map $\varpi^F$ in \eqref{e25}. Consider the fiber product
$$
(T^*{\mathcal Q}^F_s)\times_{{\mathcal Q}^F_s} ((\varpi^F)^*{\mathcal D})\,\longrightarrow\,
{\mathcal Q}^F_s\, ,
$$
where $\widehat{\phi}\, :\, {\mathcal D}\,\longrightarrow\, {\mathcal N}$ is the $T^*{\mathcal
N}$--torsor in \eqref{e21}. Now $(\varpi^F)^* T^*{\mathcal N}$ acts on it as follows. For any
$x\, \in\, {\mathcal Q}^F_s$, the action of $w\,\in\, T^*_{\varpi^F(x)} {\mathcal N}$ on
$T^*_x{\mathcal Q}^F_s\times \widehat{\phi}^{-1} (\varpi^F(x))$ sends any
$(v,\, D)\, \in\, T^*_x{\mathcal Q}^F_s\times \widehat{\phi}^{-1} (\varpi^F(x))$ to
$(v- (d\varpi^F)^*(w),\, D+w)$. Let
\begin{equation}\label{e26}
{\mathcal D}^F \,:=\, ((T^*{\mathcal Q}^F_s)\times_{{\mathcal Q}^F_s} ((\varpi^F)^*{\mathcal D}))/
((\varpi^F)^* T^*{\mathcal N})
\end{equation}
be the corresponding quotient. Let
\begin{equation}\label{e28}
\widehat{\phi}^F\, :\, {\mathcal D}^F \, \longrightarrow\, {\mathcal Q}^F_s
\end{equation}
be the natural map given by the projections $T^*{\mathcal Q}^F_s\, \longrightarrow\,
{\mathcal Q}^F_s$ and $(\varpi^F)^*{\mathcal D}\, \longrightarrow\,
{\mathcal Q}^F_s$.

The translation action of $T^*{\mathcal Q}^F_s$ on itself
and the trivial action of $T^*{\mathcal Q}^F_s$ on $(\varpi^F)^*{\mathcal D}$ together produce
an action of $T^*{\mathcal Q}^F_s$ on $(T^*{\mathcal Q}^F_s)\times_{{\mathcal Q}^F_s}
((\varpi^F)^*{\mathcal D})$. This action descends to an action of $T^*{\mathcal Q}^F_s$ on the
quotient ${\mathcal D}^F$ in \eqref{e26}. Now $({\mathcal D}^F,\,
\widehat{\phi}^F)$ gets the structure of a $T^*{\mathcal Q}^F_s$--torsor on ${\mathcal Q}^F_s$
using this action of $T^*{\mathcal Q}^F_s$ on ${\mathcal D}^F$.

We note that there is a natural map to ${\mathcal D}^F$ from the moduli space triples of the form
$(z,\, \sigma, \, D)$, where
\begin{itemize}
\item $z\, \in\, {\mathcal Q}_s$,

\item $\sigma$ is a framing over $\mathbb S$ on the vector bundle ${\mathcal K}^z$ in \eqref{e12},
and

\item $D$ is a meromorphic connection on ${\mathcal K}^z$.
\end{itemize}
More precisely, any triple $(z,\, \sigma, \, D)$ of the above form is sent to the point of 
$(\widehat{\phi}^F)^{-1}(z,\, \sigma)$ (the map $\widehat{\phi}^F$ is defined in \eqref{e28}) 
corresponding to $(0,\, D)\,\in\, (T^*{\mathcal Q}^F_s)\times_{{\mathcal Q}^F_s} 
((\varpi^F)^*{\mathcal D})$. This map to ${\mathcal D}^F$ from the moduli space of triples need 
not be injective or surjective in general. This map is injective if the differential 
$d\varpi^F(z)$ is surjective for every $z\, \in\, {\mathcal Q}^F_s$, and it is surjective if 
$d\varpi^F(z)$ is injective for every $z\, \in\, {\mathcal Q}^F_s$.

To construct the second $T^*{\mathcal Q}^F_s$--torsor, first consider the holomorphic line bundle
$$(\varpi\circ\varpi^F)^*{\mathcal L}\, \longrightarrow\, {\mathcal Q}^F_s\, ,$$
where $\varpi$ is the projection in \eqref{vp}, and $\mathcal L$ is the holomorphic line
bundle constructed in \eqref{e4}. We note that $(\varpi\circ\varpi^F)^*{\mathcal L}$
coincides with the determinant line bundle over ${\mathcal Q}^F_s$ for the family of vector
bundles $(\varpi')^*{\mathcal K}$ over $X$ parametrized by ${\mathcal Q}^F_s$, where $\mathcal K$
and $\varpi'$ are constructed in \eqref{e13} and \eqref{vpp} respectively.
Construct the holomorphic fiber bundle
\begin{equation}\label{e27}
q^F_1\, :\, {\rm Conn}((\varpi\circ\varpi^F)^*{\mathcal L})\, \longrightarrow\, {\mathcal Q}^F_s
\end{equation}
using the Atiyah bundle $\text{At}((\varpi\circ\varpi^F)^*{\mathcal L})$ that
corresponds to the sheaf of holomorphic connections on $(\varpi\circ\varpi^F)^*{\mathcal L}$.
It is a $T^*{\mathcal Q}^F_s$--torsor over ${\mathcal Q}^F_s$. As noted before,
${\rm Conn}((\varpi\circ\varpi^F)^*{\mathcal L})$ is equipped with a holomorphic
symplectic structure given by
the curvature of the tautological holomorphic connection on the line bundle
$(\varpi\circ\varpi^F\circ q^F_1)^*{\mathcal L}$. Let
\begin{equation}\label{e31}
\widetilde{\Theta}_{{\mathcal Q}^F_s}\, \in\, H^0({\rm Conn}((\varpi\circ\varpi^F)^*{\mathcal L}),
\, \bigwedge\nolimits^2 T^*{\rm Conn}((\varpi\circ\varpi^F)^*{\mathcal L}))
\end{equation}
be this holomorphic symplectic form on ${\rm Conn}((\varpi\circ\varpi^F)^*{\mathcal L})$.

Let
\begin{equation}\label{e29}
\widehat{\delta}^F\, :\, {\mathcal D}^F\times_{{\mathcal Q}^F_s}
T^*{\mathcal Q}^F_s \, \longrightarrow\, {\mathcal D}^F
\end{equation}
and
\begin{equation}\label{e32}
\widehat{\eta}^F\, :\,
{\rm Conn}((\varpi\circ\varpi^F)^*{\mathcal L})\times_{{\mathcal Q}^F_s}
T^*{\mathcal Q}^F_s \, \longrightarrow\, {\rm Conn}((\varpi\circ\varpi^F)^*{\mathcal L})
\end{equation}
be the $T^*{\mathcal Q}^F_s$--torsor structures on ${\mathcal D}^F$ and ${\rm Conn}
((\varpi\circ\varpi^F)^*{\mathcal L})$ constructed in \eqref{e28} and \eqref{e27} respectively. Let
\begin{equation}\label{e30}
{\textbf m}^F\, :\, T^*{\mathcal Q}^F_s\, \longrightarrow\,T^*{\mathcal Q}^F_s\, ,\ \
v\, \longmapsto\, 2r\cdot v
\end{equation}
be the multiplication by $2r$.

Now we have the following analog of Theorem \ref{thm3}.

\begin{theorem}\label{thm4}
There is a canonical biholomorphic map of fiber bundles
$$
\Psi^F\, :\, {\mathcal D}^F\, \longrightarrow\, {\rm Conn}((\varpi\circ\varpi^F)^* {\mathcal L})
$$
such that
\begin{enumerate}
\item $q^F_1\circ\Psi^F\,=\, \widehat{\phi}^F$, where $\widehat{\phi}^F$ and $q^F_1$ are the
projections in \eqref{e28} and \eqref{e27} respectively, and

\item $\Psi^F\circ\widehat{\delta}^F \,=\, \widehat{\eta}^F \circ (\Psi\times {\textbf m}^F)$,
where $\widehat{\delta}^F$, $\widehat{\eta}^F$ and
${\textbf m}^F$ are constructed in \eqref{e29}, \eqref{e32} and \eqref{e30} respectively.
\end{enumerate}
\end{theorem}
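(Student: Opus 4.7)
The plan is to mirror the proof of Theorem \ref{thm3}, now with the triple $({\mathcal Q}^F_s,\, \varpi^F,\, {\mathcal N})$ playing the role that $({\mathcal N},\, \varpi,\, {\mathcal M})$ played there. The idea is to exhibit both ${\mathcal D}^F$ and ${\rm Conn}((\varpi\circ\varpi^F)^*{\mathcal L})$ as pullback torsors along $\varpi^F$ of the two $T^*{\mathcal N}$--torsors appearing in Theorem \ref{thm3}, and then obtain $\Psi^F$ by applying the pullback construction to the biholomorphism $\Psi$.

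First, the very construction of ${\mathcal D}^F$ in \eqref{e26} realizes it as the pullback of the $T^*{\mathcal N}$--torsor $({\mathcal D},\, \widehat{\phi})$ via the map $\varpi^F$ from \eqref{e25}, using exactly the pullback torsor formalism recalled in the proof of Theorem \ref{thm3}. The next step is to observe that $({\rm Conn}((\varpi\circ\varpi^F)^*{\mathcal L}),\, q^F_1)$ is canonically identified with the pullback of $({\rm Conn}(\varpi^*{\mathcal L}),\, q_1)$ via $\varpi^F$. This reduces to the general fact that for a holomorphic line bundle $L'$ on a complex manifold $Y$ and a holomorphic map $f\, :\, Z\, \longrightarrow\, Y$, the formation of the sheaf of connections is compatible with pullback: one has a natural isomorphism of $T^*Z$--torsors
$$
{\rm Conn}(f^*L')\,\cong\,\bigl((T^*Z)\times_Z f^*{\rm Conn}(L')\bigr)\big/f^*T^*Y\, ,
$$
where $f^*T^*Y$ acts via $(df)^*\, :\, f^*T^*Y\, \longrightarrow\, T^*Z$. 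This identification follows from the naturality of the Atiyah exact sequence: pulling back the Atiyah sequence for $L'$ via $f$ and combining it with the cotangent contribution on $Z$ through $(df)^*$ yields the Atiyah sequence for $f^*L'$.

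With both sides exhibited as pullback torsors along $\varpi^F$, the biholomorphism $\Psi$ of Theorem \ref{thm3} induces, by functoriality, a biholomorphism $\Psi^F$ between these pullbacks. Concretely, on the level of fiber products, $\Psi^F$ is induced by $(v,\, u)\,\longmapsto\,(v,\, (\varpi^F)^*\Psi(u))$, and this map descends to the quotient precisely because $\Psi$ intertwines the torsor structures up to the scaling $\textbf{m}_2\,=\, 2r\cdot{\rm Id}$ (property (2) of Theorem \ref{thm3}). The two conditions in the statement are now immediate: condition (1) holds because $\Psi$ respects the projection to $\mathcal N$, while condition (2) holds because the factor $2r$ in $\textbf{m}_2$ passes unchanged through the pullback to become the factor $2r$ in $\textbf{m}^F$.

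The main obstacle is to verify carefully the canonical identification of ${\rm Conn}((\varpi\circ\varpi^F)^*{\mathcal L})$ with the pullback torsor of ${\rm Conn}(\varpi^*{\mathcal L})$. While conceptually this is just the assertion that the Atiyah bundle construction commutes with pullback of line bundles, the bookkeeping of the torsor action via $(d\varpi^F)^*$, and the compatibility with the tautological connection that underlies the symplectic form $\widetilde{\Theta}_{{\mathcal Q}^F_s}$ in \eqref{e31}, requires care. Once that identification is in place, the rest of the argument is formal.
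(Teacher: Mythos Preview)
Your proposal is correct and follows exactly the approach the paper indicates: the paper's own proof consists only of the sentence ``A proof of Theorem \ref{thm4} can be constructed from the proof of Theorem \ref{thm3}. We omit the details,'' and what you have written is precisely the natural way to supply those details, using the pullback torsor formalism from the proof of Theorem \ref{thm3} along $\varpi^F$ and invoking $\Psi$ in place of $F$.
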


\begin{proof}
A proof of Theorem \ref{thm4} can be constructed from the proof of Theorem
\ref{thm3}. We omit the details.
\end{proof}

An analogue of Remark \ref{rem2} persists in this set-up with framings. To elaborate,
let ${\mathcal Q}^F$ be the moduli space
of pairs $(z,\, \sigma)$, where
\begin{itemize}
\item $z\, \in\, {\mathcal Q}$, and

\item $\sigma$ is a framing, over $\mathbb S$, on the holomorphic vector bundle
${\mathcal K}^z$ (see \eqref{e12}).
\end{itemize}
So ${\mathcal Q}^F_s$ is a Zariski open subset of ${\mathcal Q}^F$ (see \eqref{e12}).
The $T^*{\mathcal Q}^F_s$--torsor ${\mathcal D}^F$ over ${\mathcal Q}^F_s$ does not have
natural extension to a $T^*{\mathcal Q}^F$--torsor over ${\mathcal Q}^F$. But the
$T^*{\mathcal Q}^F_s$--torsor ${\rm Conn}((\varpi\circ\varpi^F)^* {\mathcal L})$
over ${\mathcal Q}^F_s$ has a natural extension to a $T^*{\mathcal Q}^F$--torsor over
${\mathcal Q}^F$, because the determinant line bundle $(\varpi\circ\varpi^F)^* {\mathcal L}$
over ${\mathcal Q}^F_s$ has a natural extension to ${\mathcal Q}^F$.

Theorem \ref{thm4} gives the following.

\begin{corollary}\label{cor2}
For the biholomorphic map $\Psi^F$ in Theorem \ref{thm4}, the pulled back form
$$\Theta_{{\mathcal D}^F}\, :=\, \frac{1}{2r}(\Psi^F)^*
\widetilde{\Theta}_{{\mathcal Q}^F_s}$$ (see \eqref{e31}) defines a holomorphic symplectic
structure on ${\mathcal D}^F$. The fibers of the projection $\widehat{\phi}^F$ \eqref{e28}
are Lagrangian with respect to this symplectic form $\Theta_{{\mathcal D}^F}$. The form
$\Theta_{{\mathcal D}^F}$ is compatible with the
$T^*{\mathcal Q}^F_s$--torsor structure on ${\mathcal D}^F$ in the following
way: If $s\, :\, U\, \longrightarrow\, {\mathcal D}^F$ is any holomorphic
section of the fibration $\widehat{\phi}^F$ over an open subset $U\, \subset\, {\mathcal Q}^F_s$,
and $\omega$ is a holomorphic $1$--form on $U$, then for the holomorphic section
$$
s_\omega\, :\, U\, \longrightarrow\, {\mathcal D}^F\, , \ \ z\, \longmapsto\,
s(z) + \omega(z)\, ,
$$
the equation
$$
s^*_\omega \Theta_{{\mathcal D}^F} \,=\, s^*\Theta_{{\mathcal D}^F} + d\omega
$$
holds.
\end{corollary}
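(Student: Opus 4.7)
The plan is to obtain all three assertions by transport via the biholomorphism $\Psi^F$ of Theorem \ref{thm4}, in close parallel with the derivation of Corollary \ref{cor1} from Theorem \ref{thm3} (and of Lemma \ref{lem3} from Theorem \ref{thm2}). The starting point is that $\widetilde{\Theta}_{{\mathcal Q}^F_s}$ in \eqref{e31} is already established in the paper to be a holomorphic symplectic form on ${\rm Conn}((\varpi\circ\varpi^F)^*{\mathcal L})$, via the local model: trivializing $(\varpi\circ\varpi^F)^*{\mathcal L}$ over an open subset $U\subset {\mathcal Q}^F_s$ identifies $(q^F_1)^{-1}(U)$ with $T^*U$ in such a way that the tautological connection becomes the Liouville $1$-form; exterior differentiation then gives the standard symplectic form on $T^*U$, and the fibers of $q^F_1$ correspond to cotangent fibers, which are Lagrangian.

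Since $\Psi^F$ is a biholomorphism and $\Theta_{{\mathcal D}^F}=\frac{1}{2r}(\Psi^F)^*\widetilde{\Theta}_{{\mathcal Q}^F_s}$ is a nonzero scalar multiple of its pullback, $\Theta_{{\mathcal D}^F}$ is automatically closed and nondegenerate, hence holomorphic symplectic. Condition (1) of Theorem \ref{thm4}, namely $q^F_1\circ\Psi^F=\widehat{\phi}^F$, says that $\Psi^F$ sends fibers of $\widehat{\phi}^F$ onto fibers of $q^F_1$, so the Lagrangian property of the latter transfers to the former.

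For the torsor compatibility, the key input is the analog of \eqref{foch0} for $\widetilde{\Theta}_{{\mathcal Q}^F_s}$ on ${\rm Conn}((\varpi\circ\varpi^F)^*{\mathcal L})$, which follows from the same local model as above (the Liouville form on $T^*U$ satisfies it tautologically). Given a holomorphic section $s:U\to {\mathcal D}^F$ of $\widehat{\phi}^F$ and a holomorphic $1$-form $\omega$ on $U$, condition (2) of Theorem \ref{thm4} together with the definition of ${\textbf m}^F$ in \eqref{e30} yields
$$
\Psi^F\circ s_\omega \,=\, (\Psi^F\circ s)_{2r\omega},
$$
where the subscript on the right denotes the $T^*{\mathcal Q}^F_s$-torsor action on ${\rm Conn}((\varpi\circ\varpi^F)^*{\mathcal L})$. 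Applying the cotangent-torsor identity \eqref{foch0} to $\widetilde{\Theta}_{{\mathcal Q}^F_s}$ with the section $\Psi^F\circ s$ and the $1$-form $2r\cdot\omega$, then pulling back and multiplying by $\frac{1}{2r}$, gives exactly
$$
s^*_\omega\Theta_{{\mathcal D}^F}\,=\, s^*\Theta_{{\mathcal D}^F} + d\omega,
$$
the factor $2r$ introduced by ${\textbf m}^F$ cancelling the normalization $\frac{1}{2r}$ in the definition of $\Theta_{{\mathcal D}^F}$.

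I do not expect any serious obstacle: the content of the corollary is essentially the formal consequence that the biholomorphism $\Psi^F$ of Theorem \ref{thm4} is symplectic up to the factor $2r$. The only point requiring care is the bookkeeping of the $2r$ factors when transferring the compatibility relation \eqref{foch0} through $\Psi^F$, and this is entirely mechanical given conditions (1) and (2) of Theorem \ref{thm4}.
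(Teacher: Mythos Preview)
Your proposal is correct and follows essentially the same approach as the paper, which simply states that the corollary is given by Theorem \ref{thm4} (in direct parallel with how Corollary \ref{cor1} is derived from Theorem \ref{thm3} and Lemma \ref{lem3} from Theorem \ref{thm2}). You have merely spelled out the mechanical transport of the properties of $\widetilde{\Theta}_{{\mathcal Q}^F_s}$ through $\Psi^F$, including the bookkeeping of the $2r$ factor, which the paper leaves implicit.
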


\section*{Acknowledgements}

We thank Tony Pantev for useful discussions. The first-named author thanks
Centre de Recherches Math\'ematiques, Montreal, for hospitality. He is partially supported
by a J. C. Bose Fellowship.


\end{document}